\tikzset{node distance=3cm, auto}
\newtheorem{theorem}{Theorem}[section]
\newtheorem{proposition}[theorem]{Proposition}
\newtheorem{corollary}[theorem]{Corollary}
\theoremstyle{definition}
\newtheorem{definition}[theorem]{Definition}
\numberwithin{equation}{section}
\def\F{\mathcal{F}}
\def\G{\mathcal{G}}
\def\H{\mathcal{H}}
\def\I{\mathcal{I}}
\def\K{\mathcal{K}}
\def\L{\mathcal{L}}
\def\Nu{\mathcal{N}}
\def\QN{\mathcal{QN}}
\def\Q{\mathcal{Q}}
\def\D{\mathbb{D}}
\def\abco{\mathrm{abco}}
\def\conv{\mathrm{conv}}
\def\weak{\mathrm{weak}}
\begin{document}

\title[On holomorphic mappings with relatively $p$-compact range]{On holomorphic mappings with relatively $p$-compact range}

\author[A. Jim\'enez-Vargas]{A. Jim\'enez-Vargas}
\address[A. Jim\'enez-Vargas]{Departamento de Matem\'aticas, Universidad de Almer\'ia, 04120, Almer\'ia, Spain}
\email{ajimenez@ual.es}

\date{\today}

\subjclass[2020]{47B07, 47B10, 46E15, 46E40}
%47B07 Linear operators defined by compactness properties
%47B10 Linear operators belonging to operator ideals (nuclear, $p$-summing, in the Schatten-von Neumann classes, etc.)
%46E15 Banach spaces of continuous, differentiable or analytic functions
%46E40 Spaces of vector- and operator-valued functions
\keywords{Vector-valued holomorphic mapping, $p$-compact set, $p$-compact operator, locally $p$-compact holomorphic mapping.}

%\thanks{This research was partially supported by project UAL-FEDER grant UAL2020-FQM-B1858, by Junta de Andaluc\'{\i}a grants P20$\_$00255 and FQM194, and by Ministerio de Ciencia e Innovación grant PID2021-122126NB-C31.} 

\begin{abstract}
Related to the concept of $p$-compact operator with $p\in [1,\infty]$ introduced by Sinha and Karn \cite{SinKar-02}, this paper deals with the space $\H^\infty_{\K_p}(U,F)$ of all Banach-valued holomorphic mappings on an open subset $U$ of a complex Banach space $E$ whose ranges are relatively $p$-compact subsets of $F$. We characterize such holomorphic mappings as those whose Mujica's linearisations on the canonical predual of $\H^\infty(U)$ are $p$-compact operators. This fact allows us to make a complete study of them. We show that $\H^\infty_{\K_p}$ is a surjective Banach ideal of bounded holomorphic mappings which is generated by composition with the ideal of $p$-compact operators and contains the Banach ideal of all right $p$-nuclear holomorphic mappings. We also characterize holomorphic mappings with relatively $p$-compact ranges as those bounded holomorphic mappings which factorize through a quotient space of $\ell_{p^*}$ or as those whose transposes are quasi $p$-nuclear operators (respectively, factor through a closed subspace of $\ell_p$). 
\end{abstract}
\maketitle

%%%%%%%%%%%%%%%%%%%%%%%%%%%%%%%%%%%%%%%%%%%%%%%%%%%%%%%%%%%%%%%%%%%%%%%%%%%%%%%%%%%%%%%%%%%%%%%%%%%%%%%%%%%%%%%%%%%%%%%%%%%%%%%%%%%%%%%%%%%%%%%%%%%%%%%%%%%%%
%%%%%%%%%%%%%%%%%%%%%%%%%%%%%%%%%%%%%%%%%%%%%%%%%%%%%%%%%%%%%%%%%%%%%%%%%%%%%%%%%%%%%%%%%%%%%%%%%%%%%%%%%%%%%%%%%%%%%%%%%%%%%%%%%%%%%%%%%%%%%%%%%%%%%%%%%%%%%

\section*{Introduction}\label{section 0}
Inspired by classical Grothendieck's characterization of a relatively compact subset of a Banach space as a subset of the convex hull of a norm null sequence of vectors \cite{Gro-55}, Sinha and Karn \cite{SinKar-02} introduced and studied $p$-compact sets and $p$-compact operators with $p\in [1,\infty]$.

Let $E$ be a complex Banach space with closed unit ball $B_E$. Let $p\in (1,\infty)$ and let $p^*$ denote the \textit{conjugate index of $p$} given by $p^*=p/(p-1)$. Given $p\in [1,\infty)$, $\ell_p(E)$ denotes the Banach space of all absolutely $p$-summable sequences $(x_n)$ in $E$, equipped with the norm
$$
\left\|(x_n)\right\|_p=\left(\sum_{n=1}^\infty\left\|x_n\right\|^p\right)^{1/p},
$$
and $c_0(E)$ stands for the Banach space of all norm null sequences $(x_n)$ in $E$, endowed with the norm
$$
\left\|(x_n)\right\|_\infty=\sup\left\{\left\|x_n\right\|\colon n\in\mathbb{N}\right\}.
$$
In the case $E=\mathbb{C}$, we will simply write $\ell_p$ and $c_0$. 

For $p\in (1,\infty)$, the \textit{$p$-convex hull} of a sequence $(x_n)\in\ell_p(E)$ is defined by
$$
p\text{-}\conv(x_n)=\left\{\sum_{n=1}^\infty a_nx_n\colon (a_n)\in B_{\ell_{p^*}}\right\}.
$$
Similarly, we set 
\begin{align*}
1\text{-}\conv(x_n)&=\left\{\sum_{n=1}^\infty a_nx_n\colon (a_n)\in B_{c_0}\right\},\qquad (x_n)\in \ell_1(E),\\
\infty\text{-}\conv(x_n)&=\left\{\sum_{n=1}^\infty a_nx_n\colon (a_n)\in B_{\ell_1}\right\},\qquad (x_n)\in c_0(E).
\end{align*}
Note that $\infty\text{-}\conv(x_n)$ coincides with $\overline{\abco}(\{x_n\colon n\in\mathbb{N}\})$, the norm-closed absolutely convex hull of the set $\{x_n\colon n\in\mathbb{N}\}$ in $E$.

Following \cite{SinKar-02}, given $p\in [1,\infty]$, a set $K\subseteq E$ is said to be \textit{relatively $p$-compact} if there is a sequence $(x_n)\in\ell_p(E)$ ($(x_n)\in c_0(E)$ if $p=\infty$) such that $K\subseteq p\text{-}\conv(x_n)$. Such a sequence is not unique but a \textit{measure of the size of the $p$-compact set $K$} is introduced in \cite[p. 297]{DelPiSer-10} (see also \cite{LasTur-12}) defining
$$
m_p(K,E)=
\begin{cases}
\inf\{\left\|(x_n)\right\|_p\colon (x_n)\in\ell_p(E),\, K\subseteq p\text{-}\conv(x_n)\}        &\text{if } 1\leq p<\infty,\\
\inf\{\left\|(x_n)\right\|_p\colon (x_n)\in c_0(E),\, K\subseteq p\text{-}\conv(x_n)\}     &\text{if } p=\infty,
\end{cases}
$$
and $m_p(K,E)=\infty$ if $K$ is not $p$-compact. We frequently will write $m_p(K)$ instead of $m_p(K,E)$.

A linear operator between Banach spaces $T\colon E\to F$ is said to be \textit{$p$-compact} if $T(B_E)$ is a relatively $p$-compact subset of $F$. The space of all $p$-compact linear operators from $E$ to $F$ is denoted by $\K_p(E,F)$, and $\K_p$ is a Banach operator ideal endowed with the norm $k_p(T)=m_p(T(B_E))$ (see \cite[Theorem 4.2]{SinKar-02} and \cite[Proposition 3.15]{DelPiSer-10}).

From the cited Grothendieck's result \cite{Gro-55}, a set $K\subseteq E$ is relatively compact if and only if for every $\varepsilon>0$, there is a sequence $(x_n)\in c_0(E)$ with $\left\|(x_n)\right\|_\infty\leq \sup_{x\in K}\left\|x\right\|+\varepsilon$ such that $K\subseteq\infty\text{-}\conv(x_n)$. Hence we can consider compact sets as $\infty$-compact sets and, in this way, compact operators can be viewed as $\infty$-compact operators with $k_\infty$ being the usual operator norm. 

The work of Sinha and Karn \cite{SinKar-02} motivated many papers on $p$-compactness in operator spaces (see \cite{AinLilOja-12,ChaDimGal-19,ChoKim-10,DelPiSer-10,GalLasTur-12,LasTur-12,Pie-14}, among others) and also in Lipschitz spaces \cite{AchDahTur-19,AchDahTur-20}.

The extension of this theory to the polynomial and holomorphic settings was addressed in \cite{AroCalGarMae-16,AroMaeRue-10}. In these environments, the property of $p$-compactness was studied from the following local point of view: a mapping $f\colon U\to F$ is said to be \textit{locally $p$-compact} if every point $x\in U$ has a neighborhood $U_x\subseteq U$ such that $f(U_x)$ is relatively $p$-compact in $F$. 

The aim of this note is to study a subclass of locally $p$-compact holomorphic mappings, namely, \textit{holomorphic mappings with relatively $p$-compact range}. Notice that every such mapping is locally $p$-compact but the converse is not true. For instance, if $\D$ denotes the open complex unit disc, the holomorphic mapping $f\colon\D\to c_0$ defined by $f(z)=\sum_{n=1}^\infty z^ne_n$, where $(e_n)$ is the canonical basis of $\ell_1$, is locally $1$-compact but it has not relatively compact range (see \cite[Example 3.2]{Muj-91}) and, consequently, neither relatively $p$-compact range for any $p\geq 1$. 

Our motivation to deal with this class of mappings also arises from the study (initiated in \cite{Muj-91} and continued in \cite{CabJimRuiSep-22,JimRuiSep-22}) on the Banach space $\H^\infty_{\K}(U,F)$ formed by all holomorphic mappings from $U$ to $F$ with relatively compact range, equipped with the supremum norm.

We now briefly describe the content of this paper. Let $E$ and $F$ be complex Banach spaces, $U$ an open subset of $E$ and $p\in [1,\infty]$. Let $\H^\infty(U,F)$ denote the Banach space of all bounded holomorphic mappings from $U$ into $F$, endowed with the supremum norm. In particular, $\H^\infty(U)$ stands for $\H^\infty(U,\mathbb{C})$. 

In \cite{Muj-91}, Mujica provided a linearisation method of the members of $\H^\infty(U,F)$, which will be an essential tool in our analysis of the subject. If $\G^\infty(U)$ is the canonical predual of $\H^\infty(U)$ obtained by Mujica \cite{Muj-91} via an identification denoted $g_U$, we will establish that a bounded holomorphic mapping $f\colon U\to F$ has relatively $p$-compact range if and only if Mujica's linearisation $T_f\colon\G^\infty(U)\to F$ is a $p$-compact operator. This fact has some interesting applications. 

If $\H^\infty_{\K_p}(U,F)$ denotes the space of all holomorphic mappings with relatively $p$-compact range $f\colon U\to F$  with the natural norm $k^{\H^\infty}_p(f)=m_p(f(U))$, we will prove that $\H^\infty_{\K_p}$ is a surjective Banach ideal of bounded holomorphic mappings which is generated by composition with the ideal $\K_p$ of $p$-compact operators. This means that each mapping $f\in\H^\infty_{\K_p}(U,F)$ admits a factorization $f=T\circ g$, where $G$ is a complex Banach space, $g\in\H^\infty(U,G)$ and $T\in\K_p(G,F)$. Moreover, $k^{\H^\infty}_p(f)$ coincides with $\inf\{k_p(T)\left\|g\right\|_\infty\}$, where the infimum is extended over all such factorizations of $f$ and, curiously, this infimum is attained at the factorization $f=T_f\circ g_U$ due to Mujica \cite{Muj-91}.

%$\H^\infty_{\K_p}(U,F)$ is regular whenever $F$ is reflexive. %Furthermore, we show that $\H^\infty_{\K_\infty}(U,F)$ coincides with $\H^\infty_{\K}(U,F)$, and the norm $k^{\H^\infty}_\infty$ with the supremum norm. 

In parallelism with the linear case, we introduce the notion of right $p$-nuclear holomorphic mapping from $U$ to $F$, study its linearisation on $\G^\infty(U)$, analyse its ideal property and show that every right $p$-nuclear holomorphic mapping has relatively $p$-compact range.

Moreover, we characterize the members of $\H^\infty_{\K_p}(U,F)$ as those bounded holomorphic mappings from $U$ to $F$ which factorize through a quotient space of $\ell_{p^*}$, and also as those whose transposes are quasi $p$-nuclear operators (respectively, factor through a closed subspace of $\ell_p$).

%%%%%%%%%%%%%%%%%%%%%%%%%%%%%%%%%%%%%%%%%%%%%%%%%%%%%%%%%%%%%%%%%%%%%%%%%%%%%%%%%%%%%%%%%%%%%%%%%%%%%%%%%%%%%%%%%%%%%%%%%%%%%%%%%%%%%%%%%%%%%%%%%%%%%%%%%%%%%
%%%%%%%%%%%%%%%%%%%%%%%%%%%%%%%%%%%%%%%%%%%%%%%%%%%%%%%%%%%%%%%%%%%%%%%%%%%%%%%%%%%%%%%%%%%%%%%%%%%%%%%%%%%%%%%%%%%%%%%%%%%%%%%%%%%%%%%%%%%%%%%%%%%%%%%%%%%%%

\section{The results}\label{section 2}

From now on, unless otherwise stated, $E$ and $F$ will denote complex Banach spaces, $U$ will be an open subset of $E$ and $p\in [1,\infty]$. 

As usual, $\L(E,F)$ denotes the Banach space of all bounded linear operators from $E$ to $F$ endowed with the operator canonical norm, and $E^*$ stands for the dual space of $E$. The subspaces of $\L(E,F)$ consisting of all compact operators and finite-rank bounded operators from $E$ to $F$ will be denoted by $\K(E,F)$ and $\F(E,F)$, respectively. 

In this section, we will study holomorphic mappings $f\colon U\to F$ so that $f(U)$ is a relatively $p$-compact subset of $F$. We denote by $\H^\infty_{\K_p}(U,F)$ the set formed by such mappings and, for each $f\in\H^\infty_{\K_p}(U,F)$, we define $k_p^{\H^\infty}(f)=m_p(f(U))$.

%The research on holomorphic mappings with relatively compact range was initiated in \cite{Muj-91} and continued in \cite{JimRuiSep-22}. 
The space of all holomorphic mappings with relatively compact range from $U$ to $F$, denoted $\H^\infty_{\K}(U,F)$, is a Banach space equipped with the supremum norm (see \cite[Corollary 2.11]{JimRuiSep-22}). On account of the following result, we will only study in this paper the case $1\leq p<\infty$. 

\begin{proposition}\label{new}
%Let $U$ be an open subset of a complex Banach space $E$ and let $F$ be a complex Banach space.
$\H^\infty_{\K_\infty}(U,F)=\H^\infty_{\K}(U,F)$ and $k_\infty^{\H^\infty}(f)=\left\|f\right\|_\infty$ for all $\H^\infty_{\K_\infty}(U,F)$.
\end{proposition}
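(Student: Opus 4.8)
The plan is to reduce both assertions to a single statement about $\infty$-compact sets, namely that for every relatively compact $K\subseteq F$ one has $m_\infty(K)=\sup_{y\in K}\left\|y\right\|$, and then to specialise to $K=f(U)$.

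The equality of the two spaces is essentially a restatement of the Grothendieck characterisation recalled in the introduction: by definition $f\in\H^\infty_{\K_\infty}(U,F)$ means that $f(U)$ is relatively $\infty$-compact, that is, there is a sequence $(y_n)\in c_0(F)$ with $f(U)\subseteq\infty\text{-}\conv(y_n)$, and this is precisely the condition that $f(U)$ be relatively compact. Hence $\H^\infty_{\K_\infty}(U,F)=\H^\infty_{\K}(U,F)$ as sets, with no further work.

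For the norm identity I would prove the two inequalities separately. For the lower bound, fix any admissible sequence $(y_n)\in c_0(F)$ with $f(U)\subseteq\infty\text{-}\conv(y_n)$; every point of $f(U)$ has the form $\sum_n a_ny_n$ with $(a_n)\in B_{\ell_1}$, so $\left\|\sum_n a_ny_n\right\|\le\sum_n|a_n|\left\|y_n\right\|\le\left\|(y_n)\right\|_\infty$, and taking the supremum over $f(U)$ and then the infimum over all such $(y_n)$ gives $\left\|f\right\|_\infty=\sup_{x\in U}\left\|f(x)\right\|\le m_\infty(f(U))$. For the reverse inequality I would invoke the quantitative form of Grothendieck's result stated in the introduction: for each $\varepsilon>0$ there is $(y_n)\in c_0(F)$ with $\left\|(y_n)\right\|_\infty\le\sup_{y\in f(U)}\left\|y\right\|+\varepsilon$ and $f(U)\subseteq\infty\text{-}\conv(y_n)$, whence $m_\infty(f(U))\le\left\|f\right\|_\infty+\varepsilon$; letting $\varepsilon\to 0$ yields $m_\infty(f(U))\le\left\|f\right\|_\infty$. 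Combining the two bounds gives $k_\infty^{\H^\infty}(f)=m_\infty(f(U))=\left\|f\right\|_\infty$.

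There is no serious obstacle here: both halves are immediate consequences of the cited Grothendieck characterisation, and the only point requiring a little care is the lower bound, where one must observe that $\sum_n|a_n|\le 1$ forces the absolutely convergent combination $\sum_n a_ny_n$ to have norm at most $\sup_n\left\|y_n\right\|$, so that the supremum norm of $f$ — computed as the supremum of $\left\|y\right\|$ over $y\in f(U)$ — is dominated by every admissible $\left\|(y_n)\right\|_\infty$.
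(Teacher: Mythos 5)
Your proof is correct and follows essentially the same route as the paper: the same elementary estimate $\bigl\|\sum_n a_n y_n\bigr\|\le\left\|(y_n)\right\|_\infty$ gives $\left\|f\right\|_\infty\le k_\infty^{\H^\infty}(f)$, and the quantitative Grothendieck statement with $\varepsilon$ gives the reverse inequality and the set equality. The only cosmetic difference is that you phrase it as the general fact $m_\infty(K)=\sup_{y\in K}\left\|y\right\|$ for relatively compact $K$ before specialising to $K=f(U)$, whereas the paper argues directly with $f(U)$.
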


\begin{proof}
Let $f\in\H^\infty_{\K_\infty}(U,F)$ and let $(y_n)\in c_0(F)$ be such that $f(U)\subseteq\infty\text{-}\conv(y_n)$. Since $\infty\text{-}\conv(y_n)$ is relatively compact in $F$, it follows that $f\in\H^\infty_{\K}(U,F)$ 
%Moreover, for each $x\in U$, there exists $(a_n^{(x)})\in B_{\ell_1}$ such that $f(x)=\sum_{n=1}^\infty a_n^{(x)}y_n$, and so 
%$$
%\left\|f(x)\right\|\leq\sum_{n=1}^\infty\left|a_n^{(x)}\right|\left\|y_n\right\|\leq\left\|(y_n)\right\|_\infty.
%$$
%Taking supremum on all $x\in U$, we have 
with $\left\|f\right\|_\infty\leq \left\|(y_n)\right\|_\infty$, and taking infimum over all such sequences $(y_n)$, we have $\left\|f\right\|_\infty\leq k_\infty^{\H^\infty}(f)$. 

Conversely, let $f\in\H^\infty_{\K}(U,F)$. By classical Grothendieck's result, for every $\varepsilon>0$, there exists $(y_n)\in c_0(F)$ with $\left\|(y_n)\right\|_\infty\leq \left\|f\right\|_\infty+\varepsilon$ such that $f(U)\subseteq\infty\text{-}\conv(y_n)$. Hence $f\in\H^\infty_{\K_\infty}(U,F)$ and $k_\infty^{\H^\infty}(f)\leq\left\|f\right\|_\infty$. 
\end{proof}

%%%%%%%%%%%%%%%%%%%%%%%%%%%%%%%%%%%%%%%%%%%%%%%%%%%%%%%%%%%%%%%%%%%%%%%%%%%%%%%%%%%%%%%%%%%%%%%%%%%%%%%%%%%%%%%%%%%%%%%%%%%%%%%%%%%%%%%%%%%%%%%%%%%%%%%%%%%%%
%%%%%%%%%%%%%%%%%%%%%%%%%%%%%%%%%%%%%%%%%%%%%%%%%%%%%%%%%%%%%%%%%%%%%%%%%%%%%%%%%%%%%%%%%%%%%%%%%%%%%%%%%%%%%%%%%%%%%%%%%%%%%%%%%%%%%%%%%%%%%%%%%%%%%%%%%%%%%

\subsection{Linearisation}

Our first aim is to characterize holomorphic mappings with relatively $p$-compact range in terms of the $p$-compactness of their linearisations on the canonical predual of $\H^\infty(U)$. 

Towards this end, we first recall the following result due to Mujica \cite{Muj-91} concerning linearisation of holomorphic mappings on Banach spaces. 

\begin{theorem}\label{main-theo}\cite{Muj-91}
Let $E$ be a complex Banach space and $U$ be an open set in $E$. Let $\G^\infty(U)$ denote the norm-closed linear subspace of $\H^\infty(U)^*$ generated by the functionals $\delta(x)\in\H^\infty(U)^*$ with $x\in U$, defined by $\delta(x)(f)=f(x)$ for all $f\in\H^\infty(U)$. 
\begin{enumerate}
\item The mapping $g_U\colon U\to\G^\infty(U)$ defined by $g_U(x)=\delta(x)$ is holomorphic with $\left\|\delta(x)\right\|=1$ for all $x\in U$.
\item For every complex Banach space $F$ and every mapping $f\in\H^\infty(U,F)$, there exists a unique operator $T_f\in\L(\G^\infty(U),F)$ such that $T_f\circ g_U=f$. Furthermore, $\left\|T_f\right\|=\left\|f\right\|_\infty$.
\item For every complex Banach space $F$, the mapping $f\mapsto T_f$ is an isometric isomorphism from $\H^\infty(U,F)$ onto $\L(\G^\infty(U),F)$. 
\item $\H^\infty(U)$ is isometrically isomorphic to $\G^\infty(U)^*$, via the mapping $J_U\colon\H^\infty(U)\to\G^\infty(U)^*$ given by $J_U(f)(g_U(x))=f(x)$ for all $f\in\H^\infty(U)$ and $x\in U$.
\item $B_{\G^\infty(U)}$ coincides with $\overline{\abco}(g_U(U))$. $\hfill\Box$
\end{enumerate}
\end{theorem}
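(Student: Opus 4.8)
The plan is to carry out the standard ``predual/linearization'' construction, proving the five assertions in the order (1)--(5), since each relies on those before it. For (1) I would first compute $\left\|\delta(x)\right\|$ directly: testing against the constant function $1\in\H^\infty(U)$ gives $\left\|\delta(x)\right\|\geq 1$, while $|f(x)|\leq\left\|f\right\|_\infty$ for all $f$ gives $\left\|\delta(x)\right\|\leq 1$, hence $\left\|\delta(x)\right\|=1$. The holomorphy of $g_U$ is the genuinely analytic point. The map $g_U$ takes values in the dual space $\H^\infty(U)^*$, it is bounded (by norm $1$), and for each fixed $f\in\H^\infty(U)$ the scalar map $x\mapsto\left\langle g_U(x),f\right\rangle=f(x)$ is holomorphic. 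Since the functionals $\phi\mapsto\left\langle\phi,f\right\rangle$ with $f\in\H^\infty(U)$, $\left\|f\right\|_\infty\le 1$, form a norming family on $\G^\infty(U)$ (because $\|\sum_i a_i\delta(x_i)\|=\sup_{\|f\|_\infty\le 1}|\sum_i a_i f(x_i)|$ on the dense span, hence everywhere by density), I would invoke the standard principle that a locally bounded, scalarly holomorphic map with respect to a norming family is holomorphic to conclude $g_U\in\H(U,\G^\infty(U))$.

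For (2), given $f\in\H^\infty(U,F)$, I would define $T_f$ on the linear span of $\{\delta(x):x\in U\}$ by $T_f(\sum_i a_i\delta(x_i))=\sum_i a_i f(x_i)$. The crucial estimate, using that $y^*\circ f\in\H^\infty(U)$ with $\left\|y^*\circ f\right\|_\infty\le\left\|f\right\|_\infty$, is
$$
\Bigl\|\sum_i a_i f(x_i)\Bigr\| = \sup_{\|y^*\|\le 1}\Bigl|\sum_i a_i (y^*\circ f)(x_i)\Bigr| = \sup_{\|y^*\|\le 1}\Bigl|\Bigl\langle\sum_i a_i\delta(x_i),\,y^*\circ f\Bigr\rangle\Bigr|\le \left\|f\right\|_\infty\,\Bigl\|\sum_i a_i\delta(x_i)\Bigr\|.
$$
This simultaneously shows that $T_f$ is well defined (zero combinations map to zero) and bounded with $\left\|T_f\right\|\le\left\|f\right\|_\infty$ on the span; extending by continuity to the closure $\G^\infty(U)$ preserves the bound. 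Then $T_f\circ g_U=f$ is immediate, the reverse bound $\left\|f\right\|_\infty=\sup_x\left\|T_f(\delta(x))\right\|\le\left\|T_f\right\|$ yields $\left\|T_f\right\|=\left\|f\right\|_\infty$, and uniqueness holds since any operator satisfying $T\circ g_U=f$ must agree with $T_f$ on the dense span.

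Assertion (3) then follows: the correspondence $f\mapsto T_f$ is plainly linear, the identity $\left\|T_f\right\|=\left\|f\right\|_\infty$ gives injectivity and isometry, and surjectivity is obtained by sending $T\in\L(\G^\infty(U),F)$ to $f:=T\circ g_U$, which is holomorphic (composition of the holomorphic $g_U$ with the bounded linear $T$) and bounded by $\left\|T\right\|$, so that $T_f=T$ by uniqueness. Assertion (4) is the special case $F=\mathbb{C}$, with $J_U$ the resulting isometric isomorphism and $\left\langle J_U(f),g_U(x)\right\rangle=f(x)$. For (5), the inclusion $\overline{\abco}(g_U(U))\subseteq B_{\G^\infty(U)}$ is clear from $\left\|g_U(x)\right\|=1$ and the closedness and absolute convexity of the ball; for the reverse inclusion I would argue by contradiction, separating a hypothetical $\phi\in B_{\G^\infty(U)}\setminus\overline{\abco}(g_U(U))$ from the closed absolutely convex set $\overline{\abco}(g_U(U))$ by Hahn--Banach. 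The separating functional is, by (4), of the form $J_U(f)$ with $|f(x)|\le 1$ for all $x\in U$, hence $\left\|f\right\|_\infty\le 1$ and $\left\|J_U(f)\right\|\le 1$, contradicting that it strictly separates $\phi$ from the unit ball.

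I expect the main obstacle to be the holomorphy of $g_U$ in step (1): the remaining assertions are functional-analytic bookkeeping organized around the single boundedness estimate of step (2), whereas showing that the dual-space-valued map $g_U$ is genuinely holomorphic requires the weakly-holomorphic-implies-holomorphic principle together with a careful verification that the unit ball of $\H^\infty(U)$ acts as a norming family on $\G^\infty(U)$.
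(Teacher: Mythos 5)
Your argument is correct, but there is nothing in this paper to compare it against: the statement is imported verbatim as Mujica's linearization theorem, \cite[Theorem 2.1]{Muj-91}, and is stated without proof (hence the box terminating the statement). Measured against Mujica's original argument, your route is genuinely different. Mujica first obtains the duality $\G^\infty(U)^*\cong\H^\infty(U)$ by applying the Dixmier--Ng theorem to the compact-open topology $\tau_c$ (the unit ball of $\H^\infty(U)$ is $\tau_c$-compact by Ascoli's theorem), and then the holomorphy of $g_U$ and the linearization property are read off from that duality; you instead start from the paper's definition of $\G^\infty(U)$ as the closed span of the evaluations, prove holomorphy of $g_U$ directly, build $T_f$ by the density estimate, and only at the end deduce the duality (4) and the ball identity (5). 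Your ordering is free of circularity: the functionals $\phi\mapsto\left\langle\phi,f\right\rangle$ used in step (1) exist prior to (4), and they are norming on all of $\H^\infty(U)^*$ by the very definition of the dual norm, so the density remark there is superfluous. Each step checks out --- the single estimate in (2) gives well-definedness, boundedness and uniqueness at once; (3) and (4) are formal consequences; and (5) is the standard separation (bipolar) argument, though the contradiction should be phrased as: the normalized separating functional is $J_U(f)$ with $\left\|J_U(f)\right\|=\left\|f\right\|_\infty\le 1$, yet it exceeds $1$ at the point $\phi\in B_{\G^\infty(U)}$, which is impossible; it separates $\phi$ from $\overline{\abco}(g_U(U))$, not ``from the unit ball''. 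What your approach buys is self-containedness relative to the paper's definition of $\G^\infty(U)$ and avoidance of Dixmier--Ng and Ascoli; what it costs is the reliance on the one nontrivial analytic input, the principle that a locally bounded map which is scalarly holomorphic against a norming (even merely separating) family is holomorphic. That principle is indeed a known theorem (it appears in Mujica's book on complex analysis in Banach spaces, and in sharper form in work of Grosse-Erdmann and of Arendt--Nikolski), but it carries the entire analytic weight of step (1), so in a complete write-up it must be cited or proved rather than merely invoked as ``standard''.
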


We are now ready to state the aforementioned characterization.% of holomorphic mappings with relatively $p$-compact ranges.

\begin{theorem}\label{linear}
Let $p\in [1,\infty)$ and $f\in\H^\infty(U,F)$. The following conditions are equivalent:
\begin{enumerate}
	\item $f$ has relatively $p$-compact range.
	\item $T_f\colon\G^\infty(U)\to F$ is a $p$-compact linear operator.
\end{enumerate}
In this case, $k_p^{\H^\infty}(f)=k_p(T_f)$. Furthermore, the mapping $f\mapsto T_f$ is an isometric isomorphism from $(\H^\infty_{\K_p}(U,F),k_p^{\H^\infty})$ onto $(\K_p(\G^\infty(U),F),k_p)$.
\end{theorem}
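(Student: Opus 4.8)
The plan is to lean on Mujica's linearization (Theorem \ref{main-theo}), through which $f\mapsto T_f$ is already an isometric isomorphism of $\H^\infty(U,F)$ onto $\L(\G^\infty(U),F)$; so the whole statement reduces to identifying which $f$ produce a $p$-compact $T_f$ and to matching the two measures of $p$-compactness. The facts I would use throughout are the identity $f(U)=T_f(g_U(U))$ (from $T_f\circ g_U=f$), the inclusions $g_U(U)\subseteq B_{\G^\infty(U)}=\overline{\abco}(g_U(U))$ coming from Theorem \ref{main-theo}(1) and (5), and the elementary monotonicity $K\subseteq L\Rightarrow m_p(K)\le m_p(L)$, which is immediate from the definition of $m_p$ (any sequence realizing $L$ also realizes $K$).

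For the implication (2)$\Rightarrow$(1) together with $k_p^{\H^\infty}(f)\le k_p(T_f)$, I would argue directly: if $T_f$ is $p$-compact then $T_f(B_{\G^\infty(U)})$ is relatively $p$-compact, and since $f(U)=T_f(g_U(U))\subseteq T_f(B_{\G^\infty(U)})$, the range $f(U)$ is relatively $p$-compact with $m_p(f(U))\le m_p(T_f(B_{\G^\infty(U)}))=k_p(T_f)$ by monotonicity. This direction is routine.

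For (1)$\Rightarrow$(2) and the reverse inequality, the idea is to push the closed absolutely convex hull through $T_f$. Using linearity and continuity of $T_f$ together with $B_{\G^\infty(U)}=\overline{\abco}(g_U(U))$, one obtains
$$
T_f(B_{\G^\infty(U)})=T_f\bigl(\overline{\abco}(g_U(U))\bigr)\subseteq\overline{\abco(T_f(g_U(U)))}=\overline{\abco}(f(U)).
$$
It then remains to know that passing to the closed absolutely convex hull does not enlarge the measure, namely that $\overline{\abco}(f(U))$ is relatively $p$-compact with $m_p(\overline{\abco}(f(U)))=m_p(f(U))$; this is the point at which I would invoke the corresponding stability property of $m_p$ from \cite{DelPiSer-10,LasTur-12}. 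Granting it, $T_f(B_{\G^\infty(U)})$ is relatively $p$-compact, so $T_f$ is $p$-compact, and monotonicity gives $k_p(T_f)=m_p(T_f(B_{\G^\infty(U)}))\le m_p(\overline{\abco}(f(U)))=m_p(f(U))=k_p^{\H^\infty}(f)$. Combining the two inequalities yields $k_p^{\H^\infty}(f)=k_p(T_f)$. I expect this absolutely-convex-hull step to be the only genuine obstacle: for $1<p<\infty$ it can be proved straight from the fact that each $p\text{-}\conv(y_n)$ is a closed absolutely convex set (so the $\overline{\abco}$ of a subset stays inside it), whereas for $p=1$ the set $1\text{-}\conv(y_n)$ need not be closed and the cleanest route is to cite the invariance of $m_p$ under $\overline{\abco}$.

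Finally, for the concluding clause, Theorem \ref{main-theo}(3) already provides that $f\mapsto T_f$ is a linear isometric isomorphism of $\H^\infty(U,F)$ onto $\L(\G^\infty(U),F)$. The equivalence (1)$\Leftrightarrow$(2) shows that this bijection carries $\H^\infty_{\K_p}(U,F)$ exactly onto $\K_p(\G^\infty(U),F)$: surjectivity holds because every $T\in\K_p(\G^\infty(U),F)$ equals $T_f$ for a unique $f\in\H^\infty(U,F)$, and the $p$-compactness of $T_f$ forces $f\in\H^\infty_{\K_p}(U,F)$. The norm identity $k_p^{\H^\infty}(f)=k_p(T_f)$ then makes the restriction an isometry for the respective ideal norms, which completes the proof.
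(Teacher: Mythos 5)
Your proposal is correct and follows essentially the same route as the paper's proof: both directions rest on the chain of inclusions $f(U)=T_f(g_U(U))\subseteq T_f(B_{\G^\infty(U)})\subseteq\overline{\abco}(f(U))$ combined with monotonicity of $m_p$ and the invariance of $m_p$ under closed absolutely convex hulls, and your surjectivity argument for the isometric isomorphism is the same as the paper's. Your extra observation that $p\text{-}\conv(y_n)$ is closed and absolutely convex for $1<p<\infty$ (so the hull step is elementary there, with the cited invariance only truly needed at $p=1$) is a correct refinement of what the paper dispatches as a ``known fact.''
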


\begin{proof}
Using Theorem \ref{main-theo}, we have the following inclusions:
\begin{align*}
f(U)=T_f\circ g_U(U)&\subseteq T_f(\overline{\abco}(g_U(U)))=T_f(B_{\G^\infty(U)})\\
                    &\subseteq \overline{\abco}(T_f\circ g_U(U))=\overline{\abco}(f(U)).
\end{align*}

$(ii)\Rightarrow (i)$: If $T_f\in\K_p(\G^\infty(U),F)$, then $f\in\H^\infty_{\K_p}(U,F)$ with 
$$
k_p^{\H^\infty}(f)=m_p(f(U))\leq m_p(T_f(B_{\G^\infty(U)}))=k_p(T_f)
$$
by the first inclusion above. 

$(i)\Rightarrow (ii)$: If $f\in\H^\infty_{\K_p}(U,F)$, then $T_f\in\K_p(\G^\infty(U),F)$ with 
$$
k_p(T_f)=m_p(T_f(B_{\G^\infty(U)}))\leq m_p(\overline{\abco}(f(U)))=m_p(f(U))=k_p^{\H^\infty}(f),
$$
by the second inclusion above and the known fact that a set $K\subseteq F$ is $p$-compact if and only if $\overline{\abco}(K)$ is $p$-compact, in whose case $m_p(K)=m_p(\overline{\abco}(K))$.% (see \cite[p. 1205]{LasTur-12}).

The last assertion of the statement follows immediately from part (iii) of Theorem \ref{main-theo} and from the above proof. 
\end{proof}

%%%%%%%%%%%%%%%%%%%%%%%%%%%%%%%%%%%%%%%%%%%%%%%%%%%%%%%%%%%%%%%%%%%%%%%%%%%%%%%%%%%%%%%%%%%%%%%%%%%%%%%%%%%%%%%%%%%%%%%%%%%%%%%%%%%%%%%%%%%%%%%%%%%%%%%%%%%%%
%%%%%%%%%%%%%%%%%%%%%%%%%%%%%%%%%%%%%%%%%%%%%%%%%%%%%%%%%%%%%%%%%%%%%%%%%%%%%%%%%%%%%%%%%%%%%%%%%%%%%%%%%%%%%%%%%%%%%%%%%%%%%%%%%%%%%%%%%%%%%%%%%%%%%%%%%%%%%

\subsection{Banach ideal property}

Our next goal is to study the Banach ideal structure of $(\H^\infty_{\K_p},k_p^{\H^\infty})$. Inspired by the notion of Banach operator ideal \cite{Pie-80}, the following type of ideals was considered in \cite{CabJimRuiSep-22}.

An \textit{ideal of bounded holomorphic mappings} (or simply, a \textit{bounded-holomorphic ideal}) is a subclass $\I^{\H^\infty}$ of the class of bounded holomorphic mappings $\H^\infty$ such that for each complex Banach space $E$, each open subset $U$ of $E$ and each complex Banach space $F$, the components 
$$
\I^{\H^\infty}(U,F):=\I^{\H^\infty}\cap\H^\infty(U,F) 
$$
satisfy the following three conditions:

\begin{enumerate}
\item[(I1)] $\I^{\H^\infty}(U,F)$ is a linear subspace of $\H^\infty(U,F)$,
\item[(I2)] For any $g\in\H^\infty(U)$ and $y\in F$, the mapping $g\cdot y\colon x\mapsto g(x)y$ from $U$ to $F$ is in $\I^{\mathcal{H}^\infty}(U,F)$,
\item[(I3)] \textit{The ideal property}: if $H,G$ are complex Banach spaces, $V$ is an open subset of $H$, $h\in\H(V,U)$, $f\in\I^{\H^\infty}(U,F)$ and $S\in\L(F,G)$, then $S\circ f\circ h\in\I^{\H^\infty}(V,G)$.
\end{enumerate}

Suppose that a function $\left\|\cdot\right\|_{\I^{\H^\infty}}\colon\I^{\H^\infty}\to\mathbb{R}_0^+$ satisfies the following three properties:

\begin{enumerate}
\item[(N1)] $(\I^{\H^\infty}(U,F),\left\|\cdot\right\|_{\I^{\H^\infty}})$ is a normed (Banach) space with $\left\|f\right\|_\infty\leq\left\|f\right\|_{\I^{\H^\infty}}$ for all $f\in\I^{\H^\infty}(U,F)$, 
\item[(N2)] $\left\|g\cdot y\right\|_{\I^{\H^\infty}}=\left\|g\right\|_\infty\left\|y\right\|$ for all $g\in\H^\infty(U)$ and $y\in F$, %$\left\|\id_{\C}\right\|_{\I^{\H^\infty}}=1$.
\item[(N3)] If $H,G$ are complex Banach spaces, $V$ is an open subset of $H$, $h\in\H(V,U)$, $f\in\I^{\H^\infty}(U,F)$ and $S\in\L(F,G)$, then $\left\|S\circ f\circ h\right\|_{\I^{\H^\infty}}\leq \left\|S\right\|\left\|f\right\|_{\I^{\H^\infty}}$.
\end{enumerate}
Then $(\I^{\H^\infty},\left\|\cdot\right\|_{\I^{\H^\infty}})$ is called a \textit{normed (Banach) bounded-holomorphic ideal}.

A normed bounded-holomorphic ideal $\I^{\H^\infty}$ is said to be:
\begin{enumerate}
	\item[(R)] \textit{regular} if for any $f\in\H^\infty(U,F)$, we have that $f\in\I^{\H^\infty}(U,F)$ with $\left\|f\right\|_{\I^{\H^\infty}}=\left\|\kappa_F\circ f\right\|_{\I^{\H^\infty}}$ 	whenever $\kappa_F\circ f\in\I^{\H^\infty}(U,F^{**})$, where $\kappa_F$ denotes the isometric linear embedding from $F$ into $F^{**}$.
	\item[(S)] \textit{surjective} if for any mapping $f\in\H^\infty(U,F)$, any open subset $V$ of a complex Banach space $G$ and any surjective mapping $\pi\in\H(V,U)$, we have that $f\in\I^{\H^\infty}(U,F)$ with $\left\|f\right\|_{\I^{\H^\infty}}=\left\|f\circ \pi\right\|_{\I^{\H^\infty}}$ whenever $f\circ \pi\in\I^{\H^\infty}(V,F)$.
	%\item[(I)] \textit{injective} if for any mapping $f\in\H^\infty(U,F)$, any complex Banach space $G$ and any isometric linear embedding $\iota\colon F\to G$, we have that $f\in\I^{\H^\infty}(U,F)$ 
	%with $\left\|f\right\|_{\I^{\H^\infty}}=\left\|\iota\circ f\right\|_{\I^{\H^\infty}}$ 
	%whenever $\iota\circ f\in\I^{\H^\infty}(U,G)$.
\end{enumerate}

Bearing in mind Theorem \ref{linear}, Theorem 3.2 in \cite{AroBotPelRue-10} (see also Theorem 2.4 in \cite{CabJimRuiSep-22}) shows that $\H^\infty_{\K_p}$ is generated by composition with the operator ideal $\K_p$ (see \cite[Definition 2.3]{CabJimRuiSep-22}).

\begin{corollary}\label{messi-3}
Let $p\in [1,\infty)$ and $f\in\H^\infty(U,F)$. The following conditions are equivalent:
\begin{enumerate}
\item $f\colon U\to F$ has relatively $p$-compact range. 
\item $f=T\circ g$ for some complex Banach space $G$, $g\in\H^\infty(U,G)$ and $T\in\K_p(G,F)$. 
\end{enumerate}
In this case, we have
$$
k_p^{\H^\infty}(f)=\left\|f\right\|_{\K_p\circ\H^\infty}:=\inf\{k_p(T)\left\|g\right\|_\infty\},
$$
where the infimum runs over all factorizations of $f$ as in $(ii)$, and this infimum is attained at $T_f\circ g_U$ (\textit{Mujica's factorization of $f$} \cite{Muj-91}). 

Furthermore, the mapping $f\mapsto T_f$ is an isometric isomorphism from $(\K_p\circ\H^\infty(U,F),\left\|\cdot\right\|_{\K_p\circ\H^\infty})$ onto $(\K_p(\G^\infty(U),F),k_p)$. $\hfill\qed$
\end{corollary}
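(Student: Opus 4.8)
The plan is to deduce everything from Theorem \ref{linear} and Mujica's factorization $f = T_f \circ g_U$: once the linearization $T_f$ is available, the equivalence of $(i)$ and $(ii)$ reduces to recognizing $f = T_f \circ g_U$ as a factorization of the required form and, conversely, to checking that any such factorization forces the range of $f$ to be relatively $p$-compact.

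For $(i) \Rightarrow (ii)$ I would argue directly: if $f \in \H^\infty_{\K_p}(U,F)$, then Theorem \ref{linear} gives $T_f \in \K_p(\G^\infty(U),F)$, while Theorem \ref{main-theo}(1) provides $g_U \in \H^\infty(U,\G^\infty(U))$ with $\|g_U\|_\infty = 1$ and $T_f \circ g_U = f$. Taking $G = \G^\infty(U)$, $g = g_U$ and $T = T_f$ then exhibits the factorization in $(ii)$.

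For $(ii) \Rightarrow (i)$ I would start from a factorization $f = T \circ g$ with $g \in \H^\infty(U,G)$ and $T \in \K_p(G,F)$. Since $g$ is bounded, $g(U) \subseteq \|g\|_\infty B_G$, whence
$$
f(U) = T(g(U)) \subseteq \|g\|_\infty\, T(B_G).
$$
As $T$ is $p$-compact, $T(B_G)$ is relatively $p$-compact, and the same then holds for the dilate $\|g\|_\infty T(B_G)$ and for every subset of it; this shows $f \in \H^\infty_{\K_p}(U,F)$. Invoking the monotonicity of $m_p$ under inclusion and its homogeneity $m_p(\lambda K) = |\lambda|\, m_p(K)$, the displayed inclusion also gives
$$
k_p^{\H^\infty}(f) = m_p(f(U)) \leq m_p(\|g\|_\infty T(B_G)) = \|g\|_\infty\, m_p(T(B_G)) = \|g\|_\infty\, k_p(T),
$$
and, after taking the infimum over all factorizations as in $(ii)$, the bound $k_p^{\H^\infty}(f) \leq \inf\{k_p(T)\|g\|_\infty\}$.

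For the norm identity and the attainment statement I would evaluate the quantity $k_p(T)\|g\|_\infty$ at Mujica's factorization itself: there $\|g_U\|_\infty = 1$ by Theorem \ref{main-theo}(1) and $k_p(T_f) = k_p^{\H^\infty}(f)$ by Theorem \ref{linear}, so the product equals $k_p^{\H^\infty}(f)$. This forces $\inf\{k_p(T)\|g\|_\infty\} \leq k_p^{\H^\infty}(f)$, which combined with the reverse inequality above yields equality and shows the infimum is attained at $T_f \circ g_U$. The only step needing care is the chain of inequalities in $(ii) \Rightarrow (i)$, where I must use the homogeneity and monotonicity of $m_p$; I expect no genuine obstacle, since Theorem \ref{linear} has already carried out the substantive transfer of $p$-compactness between $f$ and $T_f$.
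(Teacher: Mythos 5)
Your proposal is correct, and one half of it is exactly the paper's argument: for $(i)\Rightarrow(ii)$ and for the attainment of the infimum at Mujica's factorization, the paper does precisely what you do (invoke Theorem \ref{linear} to get $T_f\in\K_p(\G^\infty(U),F)$, note $\left\|g_U\right\|_\infty=1$, and read off $k_p(T_f)\left\|g_U\right\|_\infty=k_p^{\H^\infty}(f)$). Where you genuinely diverge is in $(ii)\Rightarrow(i)$. The paper stays at the operator level: it linearizes $g$ as $g=T_g\circ g_U$, uses the uniqueness clause of Mujica's theorem to identify $T_f=T\circ T_g$, concludes $T_f\in\K_p(\G^\infty(U),F)$ by the ideal property of $\K_p$, and then gets $k_p^{\H^\infty}(f)=k_p(T_f)\leq k_p(T)\left\|T_g\right\|=k_p(T)\left\|g\right\|_\infty$ from the isometry in Theorem \ref{linear} and the submultiplicativity of $k_p$. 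You instead argue directly with sets: $f(U)=T(g(U))\subseteq\left\|g\right\|_\infty T(B_G)$, then use monotonicity and homogeneity of $m_p$ (both immediate from the definition of $m_p$, and used implicitly elsewhere in the paper, e.g.\ in the proofs of Theorems \ref{ideal} and \ref{linear}) to get $m_p(f(U))\leq\left\|g\right\|_\infty k_p(T)$. Your route is more elementary and self-contained: it needs neither the uniqueness of the linearization nor the Banach-ideal structure of $(\K_p,k_p)$, only the definition of relative $p$-compactness and of $m_p$. The paper's route, by contrast, keeps every holomorphic statement as a formal corollary of a linear one through the isometry $f\mapsto T_f$, which is the organizing principle of the whole paper and also produces, as a by-product, the identification $T_f=T\circ T_g$ of the linearization of any factorization. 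Both yield the same inequality, so the proofs are interchangeable.
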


The following result gathers some Banach ideal properties of $\H^\infty_{\K_p}$.

\begin{theorem}\label{ideal}
For each $p\in[1,\infty)$, $(\H^\infty_{\K_p},k_p^{\H^\infty})$ is a surjective Banach bounded-holomorphic ideal. Furthermore, the ideal $(\H^\infty_{\K_p}(U,F),k_p^{\H^\infty})$ is regular whenever $F$ is reflexive. 
\end{theorem}

\begin{proof}
In view of Corollary \ref{messi-3}, Corollary 2.5 in \cite{CabJimRuiSep-22} yields that $(\H^\infty_{\K_p},k_p^{\H^\infty})$ is a Banach bounded-holomorphic ideal. Then we only have to study its surjectivity and its regularity.

(S) Let $f\in\H^\infty(U,F)$ and assume that $f\circ \pi\in \H^\infty_{\K_p}(V,F)$, where $V$ is an open subset of a complex Banach space $G$ and $\pi\in\H(V,U)$ is surjective. Since $f(U)=(f\circ \pi)(V)$, it is immediate that $f\in\H^\infty_{\K_p}(U,F)$ with $k_p^{\H^\infty}(f)=k_p^{\H^\infty}(f\circ \pi)$. Hence $(\H^\infty_{\K_p},k_p^{\H^\infty})$ is surjective.

(R) Suppose now that $F$ is reflexive. Let $f\in\H^\infty(U,F)$ and assume that $\kappa_F\circ f\in\H^\infty_{\K_p}(U,F^{**})$. We can take a sequence $(y_n)$ in $\ell_p(F)$ (in $c_0(F)$ if $p=1$) such that $(\kappa_F\circ f)(U)\subseteq p\text{-}\conv(\kappa_F(y_n))$, that is, $\kappa_F(f(U))\subseteq \kappa_F(p\text{-}\conv(y_n))$ which yields $f(U)\subseteq p\text{-}\conv(y_n)$ by the injectivity of $\kappa_F$. Hence $f\in\H^\infty_{\K_p}(U,F)$ with $k_p^{\H^\infty}(f)\leq \left\|(y_n)\right\|_p=\left\|(\kappa_F(y_n))\right\|_p$, and so $k_p^{\H^\infty}(f)\leq k^{\H^\infty}_p(\kappa_F\circ f)$ by extending this infimum over all such sequences $(\kappa_F(y_n))$. The converse inequality follows from the condition (N3) satisfied by $(\H^\infty_{\K_p},k_p^{\H^\infty})$, and this completes the proof.
\end{proof}

%%%%%%%%%%%%%%%%%%%%%%%%%%%%%%%%%%%%%%%%%%%%%%%%%%%%%%%%%%%%%%%%%%%%%%%%%%%%%%%%%%%%%%%%%%%%%%%%%%%%%%%%%%%%%%%%%%%%%%%%%%%%%%%%%%%%%%%%%%%%%%%%%%%%%%%%%%%%%
%%%%%%%%%%%%%%%%%%%%%%%%%%%%%%%%%%%%%%%%%%%%%%%%%%%%%%%%%%%%%%%%%%%%%%%%%%%%%%%%%%%%%%%%%%%%%%%%%%%%%%%%%%%%%%%%%%%%%%%%%%%%%%%%%%%%%%%%%%%%%%%%%%%%%%%%%%%%%

\subsection{Factorization}

We now present a factorization result for holomorphic mappings with relatively $p$-compact range which should be compared with \cite[Proposition 2.9]{GalLasTur-12}. 

\begin{corollary}\label{messi-4}
Let $p\in [1,\infty)$ and $f\in\H^\infty(U,F)$. The following conditions are equivalent:
\begin{enumerate}
\item $f\colon U\to F$ has relatively $p$-compact range. 
\item There exist a closed subspace $M$ in $\ell_{p^*}$ ($c_0$ instead of $\ell_{p^*}$ if $p=1$), a separable Banach space $G$, an operator $T$ in $\K_p(\ell_{p^*}/M,G)$, a mapping $g$ in $\H^\infty_{\K}(U,\ell_{p^*}/M)$ and an operator $S$ in $\K(G,F)$ such that $f=S\circ T\circ g$. 
\end{enumerate}
In this case, $k_p^{\H^\infty}(f)=\inf\{\left\|S\right\|k_p(T)\left\|g\right\|_\infty\}$, where the infimum is extended over all factorizations of $f$ as in $(ii)$. 
\end{corollary}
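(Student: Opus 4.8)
The plan is to deduce everything from the linear factorization of $p$-compact operators by passing through Mujica's linearization. By Theorem~\ref{linear}, condition $(i)$ is equivalent to the $p$-compactness of the operator $T_f\in\L(\G^\infty(U),F)$, and in that case $k_p^{\H^\infty}(f)=k_p(T_f)$; hence the whole corollary reduces to factoring the single linear operator $T_f$ and then pulling the factorization back along $g_U$. The engine I would invoke is \cite[Proposition 2.9]{GalLasTur-12}, the linear analogue of the present statement, which provides for a $p$-compact operator a factorization $S\circ T\circ R$ through a quotient of $\ell_{p^*}$ with a separable intermediate space, compact outer factors, and multiplicative control of the norms.

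For $(i)\Rightarrow(ii)$, assuming $T_f\in\K_p(\G^\infty(U),F)$ I would apply \cite[Proposition 2.9]{GalLasTur-12} to obtain, for each $\varepsilon>0$, a closed subspace $M$ of $\ell_{p^*}$ ($c_0$ if $p=1$), a separable Banach space $G$, operators $R\in\K(\G^\infty(U),\ell_{p^*}/M)$, $T\in\K_p(\ell_{p^*}/M,G)$ and $S\in\K(G,F)$ with $T_f=S\circ T\circ R$ and $\|S\|\,k_p(T)\,\|R\|\le k_p(T_f)+\varepsilon$. Setting $g=R\circ g_U$, the mapping $g\colon U\to\ell_{p^*}/M$ is holomorphic and, since $\|g_U(x)\|=1$ for all $x$ by Theorem~\ref{main-theo}(1), one has $g(U)\subseteq R(B_{\G^\infty(U)})$, which is relatively compact because $R$ is compact; thus $g\in\H^\infty_{\K}(U,\ell_{p^*}/M)$. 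Finally $f=T_f\circ g_U=S\circ T\circ(R\circ g_U)=S\circ T\circ g$, which is the desired factorization. For $(ii)\Rightarrow(i)$, given $f=S\circ T\circ g$ as in $(ii)$, the boundedness of $g$ and the $p$-compactness of $T$ give $(T\circ g)(U)\subseteq\|g\|_\infty\,T(B_{\ell_{p^*}/M})$, a relatively $p$-compact set, so $T\circ g\in\H^\infty_{\K_p}(U,G)$; applying the ideal property (N3) of Theorem~\ref{ideal} with the bounded operator $S$ then yields $f=S\circ(T\circ g)\in\H^\infty_{\K_p}(U,F)$.

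It remains to identify the norm. For one inequality, any factorization as in $(ii)$ gives, writing $g=T_g\circ g_U$ and using the uniqueness in Theorem~\ref{main-theo}, the identity $T_f=S\circ T\circ T_g$; hence by Theorem~\ref{linear} and the ideal property of $\K_p$,
$$
k_p^{\H^\infty}(f)=k_p(T_f)\le\|S\|\,k_p(T)\,\|T_g\|=\|S\|\,k_p(T)\,\|g\|_\infty,
$$
so $k_p^{\H^\infty}(f)\le\inf\{\|S\|k_p(T)\|g\|_\infty\}$. For the reverse, the factorization constructed above satisfies $\|g\|_\infty\le\|R\|\,\|g_U\|_\infty=\|R\|$, whence
$$
\|S\|\,k_p(T)\,\|g\|_\infty\le\|S\|\,k_p(T)\,\|R\|\le k_p(T_f)+\varepsilon=k_p^{\H^\infty}(f)+\varepsilon,
$$
and letting $\varepsilon\to0$ closes the argument.

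The one genuinely substantial ingredient is the linear three-factor factorization with separable middle space and norm control, i.e. \cite[Proposition 2.9]{GalLasTur-12}; once it is in hand, every remaining step is a routine transcription through Mujica's isometry $f\mapsto T_f$, the observation that precomposition of a compact operator with $g_U$ produces a holomorphic map with relatively compact range, and the ideal properties already established in Theorem~\ref{ideal}.
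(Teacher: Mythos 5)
Your proposal is correct and follows essentially the same route as the paper: linearize via Theorem~\ref{linear}, invoke \cite[Proposition 2.9]{GalLasTur-12} to factor $T_f=S\circ T\circ R$, and pull the factorization back along $g_U$, with the norm identity obtained from the $\varepsilon$-controlled factorization and the ideal property of $\K_p$. The only cosmetic differences are that you construct $g=R\circ g_U$ and check its relatively compact range directly (the paper instead cites \cite[Corollary 2.11]{JimRuiSep-22} to write $R=T_g$ with $\left\|g\right\|_\infty=\left\|R\right\|$), and in $(ii)\Rightarrow(i)$ you argue via the inclusion $(T\circ g)(U)\subseteq\left\|g\right\|_\infty T(B_{\ell_{p^*}/M})$ and property (N3) rather than via Corollary~\ref{messi-3}; both variants are sound.
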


\begin{proof}
We will only prove it for $p\in (1,\infty)$. The case $p=1$ is similarly obtained.

$(i)\Rightarrow (ii)$: Suppose that $f\in\H^\infty_{\K_p}(U,F)$. By Theorem \ref{linear}, $T_f\in\K_p(\G^\infty(U),F)$ with $k_p(T_f)=k_p^{\H^\infty}(f)$. Applying \cite[Proposition 2.9]{GalLasTur-12}, for each $\varepsilon>0$, there exist a closed subspace $M\subseteq\ell_{p^*}$,  
%($c_0$ instead of $\ell_{p^*}$ if $p=1$), 
a separable Banach space $G$, an operator $T\in\K_p(\ell_{p^*}/M,G)$, an operator $S\in\K(G,F)$ and an operator $R\in\K(\G^\infty(U),\ell_{p^*}/M)$ such that $T_f=S\circ T\circ R$ with $\left\|S\right\|k_p(T)\left\|R\right\|\leq k_p(T_f)+\varepsilon$. Moreover, $R=T_g$ with $\left\|g\right\|_\infty=\left\|R\right\|$ for some $g\in\H^\infty_{\K}(U,\ell_{p^*}/M)$ by \cite[Corollary 2.11]{JimRuiSep-22}. Thus we obtain  
$$
f=T_f\circ g_U=S\circ T\circ R\circ g_U=S\circ T\circ T_g\circ g_U=S\circ T\circ g,
$$
with 
$$
\left\|S\right\|k_p(T)\left\|g\right\|_\infty=\left\|S\right\|k_p(T)\left\|R\right\|\leq k_p(T_f)+\varepsilon=k_p^{\H^\infty}(f)+\varepsilon.
$$
Since $\varepsilon$ was arbitrary, we deduce that $\left\|S\right\|k_p(T)\left\|g\right\|_\infty\leq k_p^{\H^\infty}(f)$.

$(ii)\Rightarrow (i)$: Assume that $f=S\circ T\circ g$ is a factorization as in $(ii)$. Since $S\circ T\in\K_p(\ell_{p^*}/M,F)$ by the ideal property of $\K_p$, Corollary \ref{messi-3} yields that $f\in\H^\infty_{\K_p}(U,F)$ with 
$$
k_p^{\H^\infty}(f)\leq k_p(S\circ T)\left\|g\right\|_\infty\leq \left\|S\right\|k_p(T)\left\|g\right\|_\infty,
$$ 
and taking infimum over all such factorizations of $f$, we have $k_p^{\H^\infty}(f)\leq\inf\{\left\|S\right\|k_p(T)\left\|g\right\|_\infty\}$. 
\end{proof}

%%%%%%%%%%%%%%%%%%%%%%%%%%%%%%%%%%%%%%%%%%%%%%%%%%%%%%%%%%%%%%%%%%%%%%%%%%%%%%%%%%%%%%%%%%%%%%%%%%%%%%%%%%%%%%%%%%%%%%%%%%%%%%%%%%%%%%%%%%%%%%%%%%%%%%%%%%%%%
%%%%%%%%%%%%%%%%%%%%%%%%%%%%%%%%%%%%%%%%%%%%%%%%%%%%%%%%%%%%%%%%%%%%%%%%%%%%%%%%%%%%%%%%%%%%%%%%%%%%%%%%%%%%%%%%%%%%%%%%%%%%%%%%%%%%%%%%%%%%%%%%%%%%%%%%%%%%%

\subsection{Transposition}

Let us recall that the \textit{transpose} of a mapping $f\in\H^\infty(U,F)$ is the bounded linear operator $f^t\colon F^*\to\H^\infty(U)$ defined by 
$$
f^t(y^*)=y^*\circ f\qquad (y^*\in F^*).
$$
Moreover, $||f^t||=\left\|f\right\|_\infty$ and $f^t=J_U^{-1}\circ(T_f)^*$, where $J_U\colon\H^\infty(U)\to\G^\infty(U)^*$ is the isometric isomorphism defined in Theorem \ref{main-theo}.

The Banach ideal $\K_p$ is associated by duality with the ideal of quasi-$p$-nuclear operators. According to \cite{PerPie-69}, for every $p\in [1,\infty)$, an operator $T\in\L(E,F)$ is said to be \textit{quasi $p$-nuclear} if there is a sequence $(x^*_n)\in\ell_{p}(E^*)$ such that
$$
\left\|T(x)\right\|\leq \left(\sum_{n=1}^\infty\left|x^*_n(x)\right|^p\right)^{1/p}\qquad (x\in E).
$$
If $\QN_p(E,F)$ denotes the set formed by such operators, then $\QN_p$ is a Banach operator ideal equipped with the norm
$$
\nu^{\Q}_p(T)=\inf\left\{\left\|(x^*_n)\right\|_p\colon \left\|T(x)\right\|\leq \left(\sum_{n=1}^\infty\left|x^*_n(x)\right|^p\right)^{1/p},\; \forall x\in E\right\}.
$$

By \cite[Proposition 3.8]{DelPiSer-10}, an operator $T\in\K_p(E,F)$ if and only if its adjoint $T^*\in\QN_p(F^*,E^*)$. Moreover, $k_p(T)=\nu^\Q_p(T^*)$ by \cite[Corollary 2.7]{GalLasTur-12}. A holomorphic version of this result can be stated as follows.

\begin{theorem}\label{now}
Let $p\in [1,\infty)$ and $f\in\H^\infty(U,F)$. The following conditions are equivalent:
\begin{enumerate}
\item $f\colon U\to F$ has relatively $p$-compact range.
\item $f^t\colon F^*\to\H^\infty(U)$ is a quasi $p$-nuclear operator. 
\end{enumerate}
In this case, $k_p^{\H^\infty}(f)=\nu_p^{\Q}(f^t)$.
\end{theorem}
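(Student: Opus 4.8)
The plan is to reduce the statement to the linear duality theory through Mujica's linearisation, chaining together three equivalences that are already available. First I would invoke Theorem \ref{linear}: the mapping $f$ has relatively $p$-compact range precisely when its linearisation $T_f\in\K_p(\G^\infty(U),F)$, and in that case $k_p^{\H^\infty}(f)=k_p(T_f)$. This converts the condition $(i)$ on the range of $f$ into a genuine $p$-compactness statement about a bounded linear operator, which is exactly where the Delgado--Pi\~{n}eiro--Serrano and Galicer--Lassalle--Turco duality results can be applied.

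Next I would pass to adjoints. By \cite[Proposition 3.8]{DelPiSer-10}, $T_f\in\K_p(\G^\infty(U),F)$ if and only if $(T_f)^*\in\QN_p(F^*,\G^\infty(U)^*)$, and by \cite[Corollary 2.7]{GalLasTur-12} one has the norm identity $k_p(T_f)=\nu_p^{\Q}((T_f)^*)$. Combining this with the previous step already gives that $f$ has relatively $p$-compact range if and only if $(T_f)^*\in\QN_p$, together with $k_p^{\H^\infty}(f)=\nu_p^{\Q}((T_f)^*)$.

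The final step is to transfer this from $(T_f)^*$ to the transpose $f^t$ itself, using the identity $f^t=J_U^{-1}\circ(T_f)^*$ recalled just before the statement, where $J_U\colon\H^\infty(U)\to\G^\infty(U)^*$ is the isometric isomorphism of Theorem \ref{main-theo}(iv). Since $\QN_p$ is a Banach operator ideal and $J_U^{-1}$ is bounded, the membership $(T_f)^*\in\QN_p$ forces $f^t=J_U^{-1}\circ(T_f)^*\in\QN_p$ with $\nu_p^{\Q}(f^t)\leq\left\|J_U^{-1}\right\|\,\nu_p^{\Q}((T_f)^*)=\nu_p^{\Q}((T_f)^*)$. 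Conversely, writing $(T_f)^*=J_U\circ f^t$ and using $\left\|J_U\right\|=1$ yields the reverse inequality, so that $\nu_p^{\Q}(f^t)=\nu_p^{\Q}((T_f)^*)$. Chaining the three equalities then delivers both the equivalence $(i)\Leftrightarrow(ii)$ and the asserted norm identity $k_p^{\H^\infty}(f)=\nu_p^{\Q}(f^t)$.

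Since the argument is a three-fold composition of known equivalences, I do not expect a serious obstacle. The only point deserving care is confirming that the ideal property of $\QN_p$, applied on the left with the isometry $J_U^{-1}$ and with its inverse $J_U$, upgrades the membership statement to an \emph{exact} norm identity rather than a mere inequality; this holds precisely because both $J_U$ and $J_U^{-1}$ have norm one, so the two ideal inequalities close up. One should also keep the $p=1$ convention ($c_0$ in place of $\ell_{p^*}$) implicit in $\QN_p$ consistent throughout, but this introduces no extra difficulty.
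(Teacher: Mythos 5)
Your proposal is correct and takes essentially the same route as the paper's proof: Theorem~\ref{linear} to replace condition (i) by $T_f\in\K_p(\G^\infty(U),F)$ with $k_p^{\H^\infty}(f)=k_p(T_f)$, the linear duality $T_f\in\K_p\Leftrightarrow (T_f)^*\in\QN_p(F^*,\G^\infty(U)^*)$ with $k_p(T_f)=\nu_p^{\Q}((T_f)^*)$, and finally the identity $f^t=J_U^{-1}\circ (T_f)^*$ to transfer everything to $f^t$. The only difference is cosmetic: you justify the last step via the two-sided ideal property of $\QN_p$ together with $\left\|J_U\right\|=\left\|J_U^{-1}\right\|=1$, whereas the paper cites Persson--Pietsch for this same transfer.
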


\begin{proof}
Applying Theorem \ref{linear}, \cite[Corollary 2.7]{GalLasTur-12} and \cite[p. 32]{PerPie-69}, respectively, we have
\begin{align*}
f\in\H^\infty_{\K_p}(U,F)&\Leftrightarrow T_f\in\K_p(\G^\infty(U),F)\\
                             &\Leftrightarrow (T_f)^*\in\QN_p(F^*,\G^\infty(U)^*)\\
                             &\Leftrightarrow f^t=J_U^{-1}\circ(T_f)^*\in\QN_p(F^*,\H^\infty(U)).
\end{align*}
In this case, $k_p^{\H^\infty}(f)=k_p(T_f)=\nu^\Q_p((T_f)^*)=\nu^{\Q}_p(f^t)$.
\end{proof}

Given $p\in [1,\infty)$, let us recall (see \cite{Pie-80}) that an operator $T\in\mathcal{L}(E,F)$ is \textit{$p$-summing} if there exists a constant $C\geq 0$ such that, regardless of the natural number $n$ and regardless of the choice of vectors $x_1,\ldots,x_n$ in $E$, we have 
$$
\left(\sum_{i=1}^n\left\|T(x_i)\right\|^p\right)^{\frac{1}{p}}\leq C \sup_{x^*\in B_{E^*}}\left(\sum_{i=1}^n\left|x^*(x_i)\right|^p\right)^{\frac{1}{p}}.
$$
The infimum of such constants $C$ is denoted by $\pi_p(T)$ and the linear space of all $p$-summing operators from $E$ into $F$ by $\Pi_p(E,F)$. %We refer to \cite[12, Section 17.3.1]{Pie-80} for a complete description of this Banach operator ideal. 

The following result could be compared with \cite[Proposition 3.13]{DelPiSer-10} stated in the linear setting.

\begin{proposition}
Let $f\in\H^\infty(U,F)$ and $g\in\H_{\K}^\infty(U,F^*)$. Assume that $T_f\in\Pi_p(\G^\infty(U),F)$ with $p\in [1,\infty)$. Then $f^t\circ g\in\H^\infty_{\K_p}(U,\H^\infty(U))$ with $k_p^{\H^\infty}(f^t\circ g)\leq\pi_p(T_f)\left\|g\right\|_\infty$. 
\end{proposition}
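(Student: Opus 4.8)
The plan is to transfer the statement to the linear level through Mujica's linearisation and then to feed the data into the linear companion \cite[Proposition 3.13]{DelPiSer-10}. First I would note that $f^t\in\L(F^*,\H^\infty(U))$ and $g\in\H^\infty(U,F^*)$, so $f^t\circ g$ is a bounded holomorphic mapping from $U$ into $\H^\infty(U)$. By Theorem \ref{linear}, applied now with target space $\H^\infty(U)$, it suffices to prove that the linearisation $T_{f^t\circ g}\colon\G^\infty(U)\to\H^\infty(U)$ is $p$-compact and that $k_p(T_{f^t\circ g})\le\pi_p(T_f)\|g\|_\infty$, since $k_p^{\H^\infty}(f^t\circ g)=k_p(T_{f^t\circ g})$.

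The next step is to compute this linearisation explicitly. Writing $g=T_g\circ g_U$ with $T_g\in\L(\G^\infty(U),F^*)$ (Theorem \ref{main-theo}) and using the linearity of $f^t$, one has $f^t\circ g=(f^t\circ T_g)\circ g_U$, so the uniqueness in Theorem \ref{main-theo} gives $T_{f^t\circ g}=f^t\circ T_g=J_U^{-1}\circ(T_f)^*\circ T_g$, where I use the identity $f^t=J_U^{-1}\circ(T_f)^*$ recorded before Theorem \ref{now}. Since $g$ has relatively compact range, $T_g$ is a compact operator: indeed $T_g(B_{\G^\infty(U)})=T_g(\overline{\abco}(g_U(U)))\subseteq\overline{\abco}(g(U))$, which is compact, while $\|T_g\|=\|g\|_\infty$ and $m_\infty(\overline{\abco}(g(U)))=\|g\|_\infty$.

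Now I would insert $T_f\in\Pi_p(\G^\infty(U),F)$ together with the compact operator $T_g$ into the linear result. The point is that the adjoint of a $p$-summing operator carries relatively compact sets to relatively $p$-compact sets, so \cite[Proposition 3.13]{DelPiSer-10} gives $(T_f)^*\circ T_g\in\K_p(\G^\infty(U),\G^\infty(U)^*)$ with $k_p((T_f)^*\circ T_g)\le\pi_p(T_f)\|T_g\|=\pi_p(T_f)\|g\|_\infty$; concretely the relevant $p$-compact set is $(T_f)^*(\overline{\abco}(g(U)))$, whose measure is controlled by $\pi_p(T_f)\,m_\infty(\overline{\abco}(g(U)))=\pi_p(T_f)\|g\|_\infty$. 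Composing on the left with the isometric isomorphism $J_U^{-1}$ preserves $p$-compactness and its measure (an instance of the ideal property of $\K_p$), so $T_{f^t\circ g}=J_U^{-1}\circ(T_f)^*\circ T_g\in\K_p(\G^\infty(U),\H^\infty(U))$ with the same estimate, and Theorem \ref{linear} then returns $f^t\circ g\in\H^\infty_{\K_p}(U,\H^\infty(U))$ with $k_p^{\H^\infty}(f^t\circ g)=k_p(T_{f^t\circ g})\le\pi_p(T_f)\|g\|_\infty$.

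The holomorphic wrapper is routine: the identification of $T_{f^t\circ g}$, the compactness of $T_g$, and the transport through $J_U^{-1}$ are all bookkeeping. The genuine weight sits entirely in the linear fact that the transpose of a $p$-summing operator sends relatively compact sets to relatively $p$-compact sets with the displayed norm control; this is where a Pietsch factorisation of $T_f$ through a subspace of some $L_p(\mu)$ enters, so matching the hypotheses of \cite[Proposition 3.13]{DelPiSer-10} exactly and tracking the measure $m_\infty(\overline{\abco}(g(U)))=\|g\|_\infty$ correctly is the main thing to get right. If one preferred to avoid citing the linear statement, the same conclusion could be reached through Theorem \ref{now} by verifying directly that $(f^t\circ g)^t$ is quasi $p$-nuclear, but the factorisation route above is shorter.
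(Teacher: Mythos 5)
Your proposal is correct and follows essentially the same route as the paper: linearise via Mujica's theorem, identify $T_{f^t\circ g}=f^t\circ T_g=J_U^{-1}\circ(T_f)^*\circ T_g$, note that $T_g$ is compact with $\|T_g\|=\|g\|_\infty$, apply \cite[Proposition 3.13]{DelPiSer-10} to get $(T_f)^*\circ T_g\in\K_p$ with the estimate $\pi_p(T_f)\|T_g\|$, transport through $J_U^{-1}$ by the ideal property, and return to the holomorphic level by Theorem \ref{linear}. The only cosmetic difference is that you compute $T_{f^t\circ g}$ directly by uniqueness of the linearisation, whereas the paper invokes the surjectivity in Theorem \ref{linear} to produce $h$ with $T_h=f^t\circ T_g$ and then checks $h=f^t\circ g$; these are interchangeable bookkeeping steps.
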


\begin{proof}
By Theorem \ref{linear} and Proposition \ref{new}, $T_g\in\K(\G^\infty(U),F^*)$ with $||T_g||=\left\|g\right\|_\infty$. Consequently, by \cite[Proposition 3.13]{DelPiSer-10}, the linear operator $(T_f)^*\circ T_g\in\K_p(\G^\infty(U),\G^\infty(U)^*)$ with $k_p((T_f)^*\circ T_g)\leq \pi_p(T_f)||T_g||$. From the equality $f^t\circ T_g=J_U^{-1}\circ (T_f)^*\circ T_g$, we infer that $f^t\circ T_g\in\K_p(\G^\infty(U),\H^\infty(U))$ with $k_p(f^t\circ T_g)=k_p((T_f)^*\circ T_g)$ by the ideal property of $\K_p$. Applying Theorem \ref{linear}, there exists $h\in\H^\infty_{\K_p}(U,\H^\infty(U))$ with $k_p^{\H^\infty}(h)=k_p(T_h)$ such that $f^t\circ T_g=T_h$. Hence $f^t\circ g=h$ and thus $f^t\circ g\in\H^\infty_{\K_p}(U,\H^\infty(U))$ with 
$$
k_p^{\H^\infty}(f^t\circ g)=k_p(T_h)=k_p((T_f)^*\circ T_g)\leq \pi_p(T_f)\left\|T_g\right\|=\pi_p(T_f)\left\|g\right\|_\infty.
$$
\end{proof}

$p$-Compact operators were characterized as those operators whose adjoints factor through a subspace of $\ell_p$ \cite[Theorem 3.2]{SinKar-02}. We now obtain a similar factorization for the transpose of a holomorphic mapping with relatively $p$-compact range (compare also to \cite[Proposition 3.10]{DelPiSer-10}). 

\begin{corollary}
Let $p\in [1,\infty)$ and $f\in\H^\infty(U,F)$. The following conditions are equivalent:
\begin{enumerate}
\item $f\colon U\to F$ has relatively $p$-compact range. 
\item There exist a closed subspace $M\subseteq\ell_{p}$ and $R\in\QN_p(F^*,M)$, $S\in\L(M,\H^\infty(U))$ such that $f^t=S\circ R$. 
\end{enumerate}
\end{corollary}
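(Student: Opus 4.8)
The plan is to pass to the linear transpose via Theorem \ref{now} and then read off the factorization directly from the representation of quasi $p$-nuclear operators; this is the transpose-level analogue of the Sinha--Karn characterization \cite[Theorem 3.2]{SinKar-02} invoked just above. By Theorem \ref{now}, statement $(i)$ holds if and only if $f^t\in\QN_p(F^*,\H^\infty(U))$, so the corollary reduces to the purely linear fact that an operator is quasi $p$-nuclear precisely when it factors as a bounded operator following a quasi $p$-nuclear operator into a closed subspace of $\ell_p$.

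For $(i)\Rightarrow(ii)$ I would start from Theorem \ref{now}, which gives $f^t\in\QN_p(F^*,\H^\infty(U))$, and then unwind the definition of quasi $p$-nuclearity: there is a sequence $(\psi_n)\in\ell_p(F^{**})$ with $\|f^t(y^*)\|\leq(\sum_{n=1}^\infty|\langle\psi_n,y^*\rangle|^p)^{1/p}$ for every $y^*\in F^*$. I would then define $R\colon F^*\to\ell_p$ by $R(y^*)=(\langle\psi_n,y^*\rangle)_n$; the same inequality shows $R$ is bounded and, keeping $(\psi_n)$ as representing sequence, that $R$ is itself quasi $p$-nuclear. Taking $M=\overline{R(F^*)}$, a closed subspace of $\ell_p$, I would regard $R$ as an element of $\QN_p(F^*,M)$. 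Since $\|f^t(y^*)\|\leq\|R(y^*)\|_{\ell_p}$, the rule $R(y^*)\mapsto f^t(y^*)$ is a well-defined contraction on the dense subspace $R(F^*)$ of $M$, so by completeness of $\H^\infty(U)$ it extends to an operator $S\in\L(M,\H^\infty(U))$ with $f^t=S\circ R$, which is exactly $(ii)$.

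The converse $(ii)\Rightarrow(i)$ is the easy half: if $f^t=S\circ R$ with $R\in\QN_p(F^*,M)$ and $S\in\L(M,\H^\infty(U))$, then $f^t\in\QN_p(F^*,\H^\infty(U))$ because $\QN_p$ is a Banach operator ideal and is therefore stable under composition on the left with bounded operators, and Theorem \ref{now} returns $(i)$. The one point requiring care is in the forward direction, where I must check simultaneously that $R$ takes values in $\ell_p$, that it inherits quasi $p$-nuclearity from the representing sequence $(\psi_n)$, and that the factor $S$ is well defined on $R(F^*)$; this last fact rests precisely on the pointwise domination $\|f^t(y^*)\|\leq\|R(y^*)\|_{\ell_p}$ built into the representation, after which the extension by density and the application of the ideal property are routine.
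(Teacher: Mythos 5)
Your proof is correct, but it takes a genuinely different route from the paper's. The paper never invokes Theorem \ref{now}: it applies Theorem \ref{linear} to get $T_f\in\K_p(\G^\infty(U),F)$, then cites the linear result \cite[Proposition 3.10]{DelPiSer-10} (an operator is $p$-compact if and only if its adjoint factors as a bounded operator composed with a quasi $p$-nuclear operator into a closed subspace of $\ell_p$) applied to $(T_f)^*$, and transports the factorization to $f^t$ via the identity $f^t=J_U^{-1}\circ (T_f)^*$; the converse writes $(T_f)^*=J_U\circ S\circ R$ and cites the same proposition again. You instead reduce to Theorem \ref{now} (so that $(i)$ becomes $f^t\in\QN_p(F^*,\H^\infty(U))$) and then prove the required linear factorization of quasi $p$-nuclear operators from scratch: the canonical map $R(y^*)=(\langle\psi_n,y^*\rangle)_n$ lands in $\ell_p$ and is bounded because $(\psi_n)$ is $p$-summable, it is quasi $p$-nuclear with the same representing sequence (the definition only constrains norms of values, so cutting the codomain down to $M=\overline{R(F^*)}$ changes nothing), and the domination $\left\|f^t(y^*)\right\|\leq\left\|R(y^*)\right\|_{\ell_p}$ makes $R(y^*)\mapsto f^t(y^*)$ a well-defined contraction on the dense subspace $R(F^*)$, which extends to $S\in\L(M,\H^\infty(U))$ by completeness of $\H^\infty(U)$; your converse via the ideal property of $\QN_p$ matches the standard argument. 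What each approach buys: the paper's proof is shorter and consistent with its overall strategy of outsourcing everything to Mujica's linearization plus known linear theorems, while yours is self-contained at the transpose level (it essentially re-proves the relevant half of \cite[Proposition 3.10]{DelPiSer-10}, in the spirit of Persson--Pietsch and Sinha--Karn) and additionally gives quantitative control, namely $\left\|S\right\|\leq 1$ and $\nu_p^{\Q}(R)\leq\left\|(\psi_n)\right\|_p$, which the corollary as stated does not even ask for. One caution: you take the representing sequence in $\ell_p(F^{**})$; this is the standard definition and the one consistent with the norm formula for $\nu_p^{\Q}$ displayed in the paper, but the paper's text misprints the space of representing sequences as $\ell_{p^*}(E^*)$, and under that literal reading the boundedness of $R$ into $\ell_p$ would fail, so you should state explicitly which definition you are using.
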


\begin{proof}
$(i)\Rightarrow (ii)$: If $f\in\H^\infty_{\K_p}(U,F)$, then $T_f\in\K_p(\G^\infty(U),F)$ by Theorem \ref{linear}. By \cite[Proposition 3.10]{DelPiSer-10}, there exist a closed subspace $M\subseteq\ell_{p}$ and operators $R\in\QN_p(F^*,M)$ and $S_0\in\L(M,\G^\infty(U)^*)$ such that $(T_f)^*=S_0\circ R$. Taking $S=J_U^{-1}\circ S_0\in\L(M,\H^\infty(U))$, we have $f^t=S\circ R$.

$(ii)\Rightarrow (i)$: Assume that $f^t=S\circ R$ being $S$ and $R$ as in the statement. It follows that $(T_f)^*=J_U\circ f^t=J_U\circ S\circ R$, and thus $T_f\in\K_p(\G^\infty(U),F)$ by \cite[Proposition 3.10]{DelPiSer-10}. Hence $f\in\H^\infty_{\K_p}(U,F)$ by Theorem \ref{linear}
\end{proof}

%%%%%%%%%%%%%%%%%%%%%%%%%%%%%%%%%%%%%%%%%%%%%%%%%%%%%%%%%%%%%%%%%%%%%%%%%%%%%%%%%%%%%%%%%%%%%%%%%%%%%%%%%%%%%%%%%%%%%%%%%%%%%%%%%%%%%%%%%%%%%%%%%%%%%%%%%%%%%
%%%%%%%%%%%%%%%%%%%%%%%%%%%%%%%%%%%%%%%%%%%%%%%%%%%%%%%%%%%%%%%%%%%%%%%%%%%%%%%%%%%%%%%%%%%%%%%%%%%%%%%%%%%%%%%%%%%%%%%%%%%%%%%%%%%%%%%%%%%%%%%%%%%%%%%%%%%%%

\subsection{Inclusions}

We will study the inclusion relations of holomorphic mappings with relatively $p$-compact range between them and with other classes of bounded holomorphic mappings. 

Our first result follows immediately by applying Theorem \ref{linear} and the fact stated in \cite[Proposition 4.3]{SinKar-02} that $\K_p\subseteq\K_q$ whenever $1\leq p\leq q\leq\infty$.

\begin{corollary}\label{new1}
If $1\leq p\leq q<\infty$ and $f\in\H^\infty_{\K_p}(U,F)$, then $f\in\H^\infty_{\K_q}(U,F)$ and $k_q^{\H^\infty}(f)\leq k_p^{\H^\infty}(f)$.$\hfill\qed$
\end{corollary}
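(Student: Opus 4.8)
The plan is to transport the inclusion relation for $p$-compact operators to the holomorphic setting through Mujica's linearization, using Theorem~\ref{linear} as a dictionary between the two worlds. First I would start from $f\in\H^\infty_{\K_p}(U,F)$ and invoke the equivalence $(i)\Leftrightarrow(ii)$ of Theorem~\ref{linear}: the linearization $T_f\colon\G^\infty(U)\to F$ is then a $p$-compact operator, with the norm identity $k_p(T_f)=k_p^{\H^\infty}(f)$.

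Next I would feed $T_f$ into the operator-ideal inclusion $\K_p\subseteq\K_q$, valid for $1\leq p\leq q<\infty$, which also carries the norm estimate $k_q(T)\leq k_p(T)$ (\cite[Proposition 4.3]{SinKar-02}). This yields $T_f\in\K_q(\G^\infty(U),F)$ with $k_q(T_f)\leq k_p(T_f)$. Applying Theorem~\ref{linear} once more, now with the index $q$ in place of $p$, converts this back into the statement that $f$ has relatively $q$-compact range, that is, $f\in\H^\infty_{\K_q}(U,F)$, together with $k_q^{\H^\infty}(f)=k_q(T_f)$. Chaining the three relations gives $k_q^{\H^\infty}(f)=k_q(T_f)\leq k_p(T_f)=k_p^{\H^\infty}(f)$, which is exactly the claim.

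Since every nontrivial ingredient is already packaged---Theorem~\ref{linear} handles the passage between mappings and operators in both directions, and \cite[Proposition 4.3]{SinKar-02} supplies the inclusion with its norm control---I do not expect a genuine obstacle here; the corollary is a formal two-step application of the linearization identity bridged by the cited operator inclusion. If one instead wanted a self-contained argument avoiding the operator language, the substance would migrate to proving the set-level statement $m_q(K)\leq m_p(K)$ for a relatively $p$-compact $K\subseteq F$: given $(y_n)\in\ell_p(F)$ with $K\subseteq p\text{-}\conv(y_n)$, one must manufacture a sequence $(z_n)\in\ell_q(F)$ whose $q$-convex hull still captures $K$ while $\left\|(z_n)\right\|_q$ stays below $\left\|(y_n)\right\|_p$. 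The delicate point there is that the coefficient constraints satisfy $B_{\ell_{q^*}}\subseteq B_{\ell_{p^*}}$, which runs the wrong way to simply reuse the same sequence, so one would need a rescaling of the $y_n$ combined with a H\"older estimate uniform over all admissible coefficient sequences---precisely the computation that \cite[Proposition 4.3]{SinKar-02} performs once and for all. Routing through Theorem~\ref{linear} lets me borrow that work rather than repeat it.
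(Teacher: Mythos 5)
Your proof is correct and follows exactly the paper's route: the corollary is derived by applying Theorem~\ref{linear} to pass from $f$ to its linearization $T_f$, invoking the operator-ideal inclusion $\K_p\subseteq\K_q$ with its norm estimate from \cite[Proposition 4.3]{SinKar-02}, and then applying Theorem~\ref{linear} again with index $q$ to return to the holomorphic setting. The chain $k_q^{\H^\infty}(f)=k_q(T_f)\leq k_p(T_f)=k_p^{\H^\infty}(f)$ is precisely the argument the paper intends.
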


Let us recall that a mapping $f\in\H^\infty(U,F)$ has \textit{finite dimensional rank} if the linear hull of its range is a finite dimensional subspace of $F$. We denote by $\H^\infty_{\F}(U,F)$ the set of all finite-rank bounded holomorphic mappings from $U$ to $F$. In the light of Theorem \ref{ideal}, it is clear that $\H^\infty_{\F}(U,F)$ is a linear subspace of $\H^\infty_{\K_p}(U,F)$. 

In similarity with the linear case, it seems natural to introduce the following class of holomorphic mappings.

\begin{definition}
Let $p\in [1,\infty)$. A mapping $f\in\H^\infty(U,F)$ is said to be \textit{$p$-approximable} if there exists a sequence $(f_n)$ in $\H^\infty_{\F}(U,F)$ such that $k_p^{\H^\infty}(f_n-f)\to 0$ as $n\to\infty$. We denote by $\H^\infty_{\overline{\F}_p}(U,F)$ the space of all $p$-approximable holomorphic mappings from $U$ to $F$.
\end{definition}

\begin{proposition}
For $p\in [1,\infty)$, every $p$-approximable holomorphic mapping from $U$ to $F$ has relatively $p$-compact range.
\end{proposition}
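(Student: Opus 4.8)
The plan is to show directly that a $p$-approximable mapping $f$ lands in the linear space $\H^\infty_{\K_p}(U,F)$, exploiting that the very convergence defining $p$-approximability forces a suitable difference into that class. The three ingredients I would line up first are: (a) that each finite-rank bounded holomorphic mapping already has relatively $p$-compact range, so $\H^\infty_{\F}(U,F)\subseteq\H^\infty_{\K_p}(U,F)$, as observed just before the Definition; (b) the identity $k_p^{\H^\infty}(g)=m_p(g(U))$ together with the convention that $m_p(K)=\infty$ precisely when $K$ fails to be relatively $p$-compact, so that $k_p^{\H^\infty}(g)<\infty$ is \emph{equivalent} to $g\in\H^\infty_{\K_p}(U,F)$; and (c) the fact, from Theorem \ref{ideal}, that $\H^\infty_{\K_p}(U,F)$ is a linear subspace of $\H^\infty(U,F)$.

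With these in hand the argument is short. Given $f$ $p$-approximable, choose $(f_n)\subseteq\H^\infty_{\F}(U,F)$ with $k_p^{\H^\infty}(f_n-f)\to 0$. In particular there is an index $n$ with $k_p^{\H^\infty}(f_n-f)<\infty$, so by (b) the mapping $f_n-f$ has relatively $p$-compact range, i.e.\ $f_n-f\in\H^\infty_{\K_p}(U,F)$. Since $f_n\in\H^\infty_{\K_p}(U,F)$ by (a), linearity (c) yields $f=f_n-(f_n-f)\in\H^\infty_{\K_p}(U,F)$; that is, $f$ has relatively $p$-compact range, as claimed.

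A slightly more structural reformulation, which I would mention as an alternative, runs through completeness: since $(\H^\infty_{\K_p}(U,F),k_p^{\H^\infty})$ is a Banach space by Theorem \ref{ideal} and contains $\H^\infty_{\F}(U,F)$, subadditivity of $k_p^{\H^\infty}$ makes $(f_n)$ Cauchy there, so it converges to some $g\in\H^\infty_{\K_p}(U,F)$; because $k_p^{\H^\infty}$ dominates $\left\|\cdot\right\|_\infty$, the convergence $k_p^{\H^\infty}(f_n-f)\to 0$ also gives $\left\|f_n-f\right\|_\infty\to 0$, whence $g=f$ by uniqueness of uniform limits, and therefore $f\in\H^\infty_{\K_p}(U,F)$.

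The only point needing care — and what I would flag as the single genuine obstacle — is the logical status of the symbol $k_p^{\H^\infty}(f_n-f)$ in the definition: a priori $f$ is only assumed bounded holomorphic, so $f_n-f$ need not lie in $\H^\infty_{\K_p}(U,F)$ and $k_p^{\H^\infty}(f_n-f)$ must be read as the extended-real quantity $m_p((f_n-f)(U))\in[0,\infty]$. Once one observes that finiteness of this quantity is exactly membership in $\H^\infty_{\K_p}(U,F)$, the hypothesis does all the work, and no explicit construction of a $p$-summable sequence generating $f(U)$ is required.
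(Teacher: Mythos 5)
Your proof is correct, and it takes a genuinely different route from the paper's. The paper transfers the problem to the operator side: via Mujica's linearization it identifies each $T_{f_n}$ as a finite-rank operator on $\G^\infty(U)$, invokes the Sinha--Karn facts that $\F(\G^\infty(U),F)\subseteq\K_p(\G^\infty(U),F)$ and that $(\K_p,k_p)$ is a Banach (hence closed) operator ideal to conclude $T_f\in\K_p(\G^\infty(U),F)$ from $k_p(T_{f_n}-T_f)=k_p^{\H^\infty}(f_n-f)\to 0$, and then returns to the holomorphic side by Theorem \ref{linear}. You instead stay entirely on the holomorphic side: finite-rank mappings lie in $\H^\infty_{\K_p}(U,F)$, finiteness of the extended-real quantity $k_p^{\H^\infty}(f_n-f)=m_p\bigl((f_n-f)(U)\bigr)$ for a \emph{single} $n$ already places $f_n-f$ in $\H^\infty_{\K_p}(U,F)$, and linearity of that space (Theorem \ref{ideal}, condition (N1)) gives $f=f_n-(f_n-f)\in\H^\infty_{\K_p}(U,F)$. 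Your reading of the definition is the right one: the paper's own convention $m_p(K)=\infty$ for non-$p$-compact $K$ makes $k_p^{\H^\infty}(f_n-f)$ meaningful for arbitrary bounded holomorphic differences and finite precisely when $f_n-f$ has relatively $p$-compact range. Your argument is more elementary --- it needs neither linearization nor any completeness statement --- and it exposes that the hypothesis is used only through the existence of one finite-rank mapping at finite $k_p^{\H^\infty}$-distance from $f$. What the paper's route buys in exchange is the explicit operator-side conclusion $T_f\in\K_p(\G^\infty(U),F)$, consistent with the paper's overall strategy of reducing holomorphic statements to linear ones; your completeness-based variant is the exact holomorphic-level analogue of that argument, substituting the Banach-ideal property of $(\H^\infty_{\K_p},k_p^{\H^\infty})$ from Theorem \ref{ideal} for the Sinha--Karn closedness of $\K_p$.
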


\begin{proof}
Let $f\in\H^\infty_{\overline{\F}_p}(U,F)$. Hence there is a sequence $(f_n)$ in $\H^\infty_{\F}(U,F)$ such that $k_p^{\H^\infty}(f_n-f)\to 0$ as $n\to\infty$. Since $T_{f_n}\in\F(\G^\infty(U),F)$ by \cite[Proposition 3.1]{Muj-91}, $\F(\G^\infty(U),F)\subseteq\K_p(\G^\infty(U),F)$ by \cite[Theorem 4.2]{SinKar-02} and $k_p(T_{f_n}-T_f)=k_p(T_{f_n-f})=k_p^{\H^\infty}(f_n-f)$ for all $n\in\mathbb{N}$ by Theorems \ref{main-theo} and \ref{linear}, we deduce that $T_f\in\K_p(\G^\infty(U),F)$ by \cite[Theorem 4.2]{SinKar-02}, and so $f\in\H^\infty_{\K_p}(U,F)$ by Theorem \ref{linear}.
\end{proof}

Given $p\in [1,\infty)$, $\ell^{\mathrm{weak}}_{p}(E)$ denotes the Banach space of all weakly $p$-summable sequences $(x_n)$ in $E$, endowed with the norm
$$
\left\|(x_n)\right\|^{\mathrm{weak}}_p=\sup\left\{\left(\sum_{n=1}^\infty\left|f(x_n)\right|^p\right)^{1/p}\colon f\in B_{E^*}\right\}.
$$
Let us recall (see \cite{Per-69}) that an operator $T\in\L(E,F)$ is said to be \textit{right $p$-nuclear} if there are sequences $(x^*_n)\in\ell^{\weak}_{p^*}(E^*)$ and $(y_n)\in\ell_{p}(F)$ such that
$$
T(x)=\sum_{n=1}^\infty x^*_n(x)y_n\qquad (x\in E),
$$
and the series converges in $\L(E,F)$. The set of such operators, denoted $\Nu^p(E,F)$, is a Banach space with the norm
$$
\nu^p(T)=\inf\left\{\left\|(x^*_n)\right\|^\weak_{p^*}\left\|(y_n)\right\|_p\right\},
$$
where the infimum is taken over all representations of $T$ as above. 

A holomorphic variant of this class of operators can be introduced as follows.

\begin{definition} 
Given $p\in [1,\infty)$, a holomorphic mapping $f\colon U\to F$ is said to be \textit{right $p$-nuclear} if there exist sequences $(g_n)$ in $\ell^{\mathrm{weak}}_{p^*}(\H^\infty(U))$ and $(y_n)$ in $\ell_{p}(F)$ such that $f=\sum_{n=1}^\infty g_n\cdot y_n$ in $(\H^\infty(U,F),\left\|\cdot\right\|_\infty)$. We set
$$
\nu^{p{\H^\infty}}(f)=\inf\left\{\left\|(g_n)\right\|^{\mathrm{weak}}_{p^*}\left\|(y_n)\right\|_{p}\right\},
$$
with the infimum taken over all right $p$-nuclear holomorphic representations of $f$ as above. Let $\H^\infty_{\Nu^p}(U,F)$ denote the set of all right $p$-nuclear holomorphic mappings from $U$ into $F$.
\end{definition}

We now establish the relationships of a right $p$-nuclear holomorphic mapping $f\colon U\to F$ with its linearisation $T_f\colon\G^\infty(U)\to F$.

\begin{theorem}\label{nuclear}
Let $p\in [1,\infty)$ and $f\in\H^\infty(U,F)$. The following conditions are equivalent:
\begin{enumerate}
	\item $f\colon U\to F$ is right $p$-nuclear.
	\item $T_f\colon\G^\infty(U)\to F$ is a right $p$-nuclear operator.
\end{enumerate}
In this case, $\nu^{p\H^\infty}(f)=\nu^p(T_f)$. Furthermore, the mapping $f\mapsto T_f$ is an isometric isomorphism from $(\H^\infty_{\Nu^p}(U,F),\nu^{p\H^\infty})$ onto $(\Nu^p(\G^\infty(U),F),\nu^p)$.
\end{theorem}

\begin{proof}
$(i)\Rightarrow (ii)$: Assume that $f\in\H^\infty_{\Nu^p}(U,F)$ and let $\sum_{n\geq 1}g_n\cdot y_n$ be a right $p$-nuclear holomorphic representation of $f$. By Theorem \ref{main-theo}, there is a unique operator $T_f\in\L(\G^\infty(U,F)$ such that $T_f\circ g_U=f$. Similarly, for each $n\in\mathbb{N}$, there is a functional $T_{g_n}\in\G^\infty(U)^*$ with $||T_{g_n}||=\left\|g_n\right\|_\infty$ such that $T_{g_n}\circ g_U=g_n$. Notice that $\sum_{n=1}^{+\infty}T_{g_n}\cdot y_n\in\L(\G^\infty(U),F)$ since 
$$
\sum_{k=1}^m\left\|T_{g_k}\cdot y_k\right\|=\sum_{k=1}^m\left\|T_{g_k}\right\|\left\|y_k\right\|=\sum_{k=1}^m\left\|g_k\right\|_\infty\left\|y_k\right\|\leq\left\|(g_n)\right\|^{\mathrm{weak}}_{p^*}\left\|(y_n)\right\|_{p}
$$
for all $m\in\mathbb{N}$. We can write
$$
f=\sum_{n=1}^\infty g_n\cdot y_n=\sum_{n=1}^\infty (T_{g_n}\circ g_U)\cdot y_n=\left(\sum_{n=1}^\infty T_{g_n}\cdot y_n\right)\circ g_U
$$
in $(\H^\infty(U,F),\left\|\cdot\right\|_\infty)$. Hence $T_f=\sum_{n=1}^\infty T_{g_n}\cdot y_n$ by Theorem \ref{main-theo}, where $(T_{g_n})\in\ell^{\mathrm{weak}}_{p^*}(\G^\infty(U)^*)$ with $\left\|(T_{g_n})\right\|^{\mathrm{weak}}_{p^*}\leq\left\|(g_n)\right\|^{\mathrm{weak}}_{p^*}$. Therefore $T_f\in\Nu^p(\G^\infty(U),F)$ with $\nu^p(T_f)\leq\left\|(g_n)\right\|^{\mathrm{weak}}_{p^*} \left\|(y_n)\right\|_p$. Taking infimum over all right $p$-nuclear holomorphic representation of $f$, we deduce that $\nu^p(T_f)\leq\nu^{p\H^\infty}(f)$.

$(ii)\Rightarrow (i)$: Suppose that $T_f\in\Nu^p(\G^\infty(U),F)$ and let $\sum_{n\geq 1}\phi_n\cdot y_n$ be a right $p$-nuclear representation of $T_f$. By Theorem \ref{main-theo}, for each $n\in\mathbb{N}$, there is a $g_n\in\H^\infty(U)$ such that $J_U(g_n)=\phi_n$ with $\left\|g_n\right\|_\infty=||\phi_n||$. We have 
\begin{align*}
\left\|\left(f-\sum_{k=1}^ng_k\cdot y_k\right)(x)\right\|&=\left\|f(x)-\sum_{k=1}^ng_k(x)y_k\right\|=\left\|T_f(g_U(x))-\sum_{k=1}^n J_U(g_k)(g_U(x))y_k\right\|\\
&=\left\|\left(T_f-\sum_{k=1}^n\phi_k\cdot y_k\right)(g_U(x))\right\|\leq \left\|T_f-\sum_{k=1}^n\phi_k\cdot y_k\right\|\left\|g_U(x)\right\|\\
&=\left\|T_f-\sum_{k=1}^n\phi_k\cdot y_k\right\|
\end{align*}
for all $x\in U$ and $n\in\mathbb{N}$. Taking supremum over all $x\in U$, we obtain
$$
\left\|f-\sum_{k=1}^ng_k\cdot y_k\right\|_\infty\leq\left\|T_f-\sum_{k=1}^n\phi_k\cdot y_k\right\| 
$$
for all $n\in\mathbb{N}$. Hence $f=\sum_{n=1}^\infty g_n\cdot y_n$ in $(\H^\infty(U,F),\left\|\cdot\right\|_\infty)$, where $(g_n)\in\ell^{\mathrm{weak}}_{p^*}(\H^\infty(U))$ with $\left\|(g_n)\right\|^{\mathrm{weak}}_{p^*}\leq\left\|(\phi_n)\right\|^{\mathrm{weak}}_{p^*}$. So $f\in \H^\infty_{\Nu^p}(U,F)$ with $\nu^{p\H^\infty}(f)\leq\left\|(\phi_n)\right\|^{\mathrm{weak}}_{p^*}\left\|(y_n)\right\|_p$, and this implies that $\nu^{p\H^\infty}(f)\leq\nu^p(T_f)$.

The last assertion in the statement follows easily from what was proved above and from Theorem \ref{main-theo}.
\end{proof}

Combining Theorem \ref{nuclear}, firstly with \cite[Theorem 3.2]{AroBotPelRue-10}, and secondly with \cite[Corollary 2.5]{CabJimRuiSep-22}, we derive the following two results.

\begin{corollary}\label{messi-3-3}
Let $p\in [1,\infty)$ and $f\in\H^\infty(U,F)$. The following conditions are equivalent:
\begin{enumerate}
\item $f\colon U\to F$ is right $p$-nuclear. 
\item $f=T\circ g$ for some complex Banach space $G$, $g\in\H^\infty(U,G)$ and $T\in\Nu^p(G,F)$. 
\end{enumerate}
In this case, we have
$$
\nu^{p\H^\infty}(f)=\left\|f\right\|_{\Nu^p\circ\H^\infty}:=\inf\{\nu^p(T)\left\|g\right\|_\infty\},
$$
where the infimum is taken over all factorizations of $f$ as in $(ii)$, and this infimum is attained at $T_f\circ g_U$.  

Furthermore, the mapping $f\mapsto T_f$ is an isometric isomorphism from $(\Nu^p\circ\H^\infty(U,F),\left\|\cdot\right\|_{\Nu^p\circ\H^\infty})$ onto $(\Nu^p(\G^\infty(U),F),\nu^p)$. $\hfill\qed$
\end{corollary}

\begin{corollary}\label{ideal-nuclear}
For each $p\in[1,\infty)$, $(\H^\infty_{\Nu^p},\nu^{p\H^\infty})$ is a Banach bounded-holomorphic ideal. $\hfill\qed$

\end{corollary}

The following relation is readily obtained.

\begin{corollary}\label{new1-1}
Let $p\in [1,\infty)$ and $f\in\H^\infty_{\Nu^p}(U,F)$. Then $f\in\H^\infty_{\K_p}(U,F)$ with $k_p^{\H^\infty}(f)\leq\nu^{p\H^\infty}(f)$.
\end{corollary}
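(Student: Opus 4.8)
The plan is to reduce the statement to the composition characterisation of relatively $p$-compact ranges already proved in Corollary \ref{messi-3}, rather than arguing from scratch. Given $f\in\Nu^{\H^\infty}_p(U,F)$, I would fix an arbitrary $p$-nuclear factorisation $f=T\circ M_\lambda\circ g$, with $T\in\L(\ell_p,F)$, $g\in\H^\infty(U,\ell_\infty)$ and $M_\lambda$ the diagonal operator induced by $\lambda\in\ell_p$, and then regroup it as $f=(T\circ M_\lambda)\circ g$. The point of this regrouping is that $g$ supplies the free bounded-holomorphic factor while $T\circ M_\lambda$ is a single linear operator; once I know that $T\circ M_\lambda$ is $p$-compact, Corollary \ref{messi-3} (with $G=\ell_\infty$) applies verbatim and produces both the membership $f\in\H^\infty_{\K_p}(U,F)$ and the desired norm estimate.

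The heart of the argument is thus to show that the diagonal operator $M_\lambda$ is $p$-compact with $k_p(M_\lambda)\le\|M_\lambda\|=\|\lambda\|_p$. For this I would exhibit the explicit generating sequence $(\lambda_n e_n)_n$, where $(e_n)$ is the canonical basis: it lies in $\ell_p(\ell_p)$ with $\|(\lambda_n e_n)\|_p=\left(\sum_n|\lambda_n|^p\right)^{1/p}=\|\lambda\|_p$, and the action $(a_n)\mapsto M_\lambda((a_n))=\sum_n a_n\lambda_n e_n$ identifies the relevant ball of coefficients (namely $B_{\ell_{p^*}}$, or $B_{c_0}$ when $p=1$) with the generators of $p\text{-}\conv(\lambda_n e_n)$. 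This shows that $M_\lambda$ maps the unit ball of its domain into the relatively $p$-compact set $p\text{-}\conv(\lambda_n e_n)$, whence $M_\lambda\in\K_p$ with $k_p(M_\lambda)\le\|\lambda\|_p$. The ideal property of $\K_p$ recalled in the Introduction then gives $T\circ M_\lambda\in\K_p(\ell_\infty,F)$ with $k_p(T\circ M_\lambda)\le\|T\|\,k_p(M_\lambda)\le\|T\|\,\|M_\lambda\|$.

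Finally, applying Corollary \ref{messi-3} to $f=(T\circ M_\lambda)\circ g$ yields $f\in\H^\infty_{\K_p}(U,F)$ with $k_p^{\H^\infty}(f)\le k_p(T\circ M_\lambda)\,\|g\|_\infty\le\|T\|\,\|M_\lambda\|\,\|g\|_\infty$, and taking the infimum over all $p$-nuclear factorisations of $f$ gives $k_p^{\H^\infty}(f)\le\nu_p^{\H^\infty}(f)$, as claimed. I expect the only genuinely substantive step to be the $p$-compactness of the diagonal operator together with the sharp bound $k_p(M_\lambda)\le\|M_\lambda\|$; everything else is either the ideal property of $\K_p$ or a direct invocation of Corollary \ref{messi-3}. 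The delicate point in that step is the exact matching between the image of the unit ball under $M_\lambda$ and $p\text{-}\conv(\lambda_n e_n)$, i.e.\ the requirement that the coefficients producing this image be precisely those governing the $p$-convex hull; this is where one must be careful about the coefficient space, and it is the place where the whole estimate is won or lost.
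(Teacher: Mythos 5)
Your overall reduction is sound: regrouping a $p$-nuclear factorisation as $f=(T\circ M_\lambda)\circ g$ and invoking Corollary \ref{messi-3} does give both the membership and the norm estimate, \emph{provided} one knows that $M_\lambda\in\K_p(\ell_\infty,\ell_p)$ with $k_p(M_\lambda)\le\|\lambda\|_p$. The gap is precisely in your proof of that key step, which you yourself single out as the place where the estimate is won or lost. The inclusion you claim, $M_\lambda(B_{\ell_\infty})\subseteq p\text{-}\conv(\lambda_ne_n)$, is false: the image of the unit ball is $\left\{\sum_n a_n\lambda_ne_n\colon (a_n)\in B_{\ell_\infty}\right\}$, whose coefficients range over $B_{\ell_\infty}$, whereas $p\text{-}\conv(\lambda_ne_n)$ only admits coefficients in $B_{\ell_{p^*}}$ (in $B_{c_0}$ if $p=1$), and $B_{\ell_{p^*}}\subsetneq B_{\ell_\infty}$; the containment between the two sets actually goes the other way. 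Concretely, if $\lambda_n\neq 0$ for all $n$, then $\lambda=M_\lambda((1,1,1,\dots))$ belongs to $M_\lambda(B_{\ell_\infty})$, but writing $\lambda=\sum_n b_n\lambda_ne_n$ and comparing coordinates forces $b_n=1$ for every $n$, which is incompatible with $(b_n)\in\ell_{p^*}$ (or with $(b_n)\in c_0$ when $p=1$). So $(\lambda_ne_n)$ is not a generating sequence for $M_\lambda(B_{\ell_\infty})$, and no "exact matching of coefficient balls" is available.

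The assertion $M_\lambda\in\K_p(\ell_\infty,\ell_p)$ with $k_p(M_\lambda)\le\|\lambda\|_p$ is nevertheless true, but it is not a formal identification: it is essentially the nontrivial linear theorem that $p$-nuclear operators are $p$-compact with $k_p\le\nu_p$ (one must manufacture a genuinely different generating sequence, e.g.\ by a blocking argument on $\lambda$); this is the result of Sinha--Karn and Delgado--Pi\~{n}eiro--Serrano that the paper imports as \cite[p. 295]{DelPiSer-10}. Indeed, the paper's own proof runs entirely through linearisation: it uses \cite[Proposition 3.12]{CabJimRuiSep-22} to get $T_f\in\Nu_p(\G^\infty(U),F)$ with $\nu_p(T_f)=\nu_p^{\H^\infty}(f)$, then the linear inclusion $\Nu_p\subseteq\K_p$ with $k_p(T_f)\le\nu_p(T_f)$, and then Theorem \ref{linear} to return to $f$. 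If you replace your hull computation by a citation of that same linear fact (applied to $M_\lambda$, or directly to $T\circ M_\lambda$), the remainder of your argument --- the ideal property of $\K_p$, Corollary \ref{messi-3} applied to $f=(T\circ M_\lambda)\circ g$, and the infimum over all $p$-nuclear factorisations --- goes through verbatim and yields $k_p^{\H^\infty}(f)\le\nu_p^{\H^\infty}(f)$; as it stands, however, the substantive step is unproved.
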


\begin{proof}
By Proposition \ref{nuclear}, we have $T_f\in\Nu^p(\G^\infty(U),F)$ with $\nu^p(T_f)=\nu^{p\H^\infty}(f)$. It follows that $T_f\in\K_p(\G^\infty(U),F)$ with $k_p(T_f)\leq\nu^p(T_f)$ (see \cite[p. 295]{DelPiSer-10}). Hence $f\in\H^\infty_{\K_p}(U,F)$ with $k_p^{\H^\infty}(f)\leq\nu^{p\H^\infty}(f)$ by Theorem \ref{linear}.
\end{proof}

Given Banach spaces $E,F,G$, let us recall that a normed operator ideal $\I$ is \textit{surjective} if for every surjection $Q\in\L(G,E)$ and every $T\in\L(E,F)$, it follows from $T\circ Q\in\I(G,F)$ that $T\in\I(E,F)$ with $\left\|T\right\|_\I=\left\|T\circ Q\right\|_\I$. The smallest surjective ideal which contains $\I$, denoted by $\I^{\mathrm{sur}}$, is called the \textit{surjective hull of $\I$}. 

We now introduce the analogue concept in the holomorphic setting.

\begin{definition}
The \textit{surjective hull} of a bounded-holomorphic ideal $\I^{\H^\infty}$ is the smallest surjective ideal which contains $\I^{\H^\infty}$, and it is denoted by $(\I^{\H^\infty})^{\mathrm{sur}}$.
\end{definition}

We have seen that $\H^\infty_{\K_p}$ is a surjective bounded-holomorphic ideal which contains $\H^\infty_{\Nu^p}$, and therefore $(\H^\infty_{\Nu^p})^{\mathrm{sur}}\subseteq\H^\infty_{\K_p}$, but we do not know if both sets are equal as it happens in the linear case (see \cite[Proposition 3.11]{DelPiSer-10}) .

%%%%%%%%%%%%%%%%%%%%%%%%%%%%%%%%%%%%%%%%%%%%%%%%%%%%%%%%%%%%%%%%%%%%%%%%%%%%%%%%%%%%%%%%%%%%%%%%%%%%%%%%%%%%%%%%%%%%%%%%%%%%%%%%%%%%%%%%%%%%%%%%%%%%%%%%%%%%%
%%%%%%%%%%%%%%%%%%%%%%%%%%%%%%%%%%%%%%%%%%%%%%%%%%%%%%%%%%%%%%%%%%%%%%%%%%%%%%%%%%%%%%%%%%%%%%%%%%%%%%%%%%%%%%%%%%%%%%%%%%%%%%%%%%%%%%%%%%%%%%%%%%%%%%%%%%%%%

\textbf{Acknowledgements.} This research was partially supported by project UAL-FEDER grant UAL2020-FQM-B1858, by Junta de Andaluc\'{\i}a grants P20$\_$00255 and FQM194, and by grant PID2021-122126NB-C31 funded by MCIN/AEI/ 10.13039/501100011033 and by ``ERDF A way of making Europe''.
%by Ministerio de Ciencia e Innovación grant PID2021-122126NB-C31. %The authors are grateful to the referees for their helpful comments that have improved this paper.

%%%%%%%%%%%%%%%%%%%%%%%%%%%%%%%%%%%%%%%%%%%%%%%%%%%%%%%%%%%%%%%%%%%%%%%%%%%%%%%%%%%%%%%%%%%%%%%%%%%%%%%%%%%%%%%%%%%%%%%%%%%%%%%%%%%%%%%%%%%%%%%%%%%%%%%%%%%%%
%%%%%%%%%%%%%%%%%%%%%%%%%%%%%%%%%%%%%%%%%%%%%%%%%%%%%%%%%%%%%%%%%%%%%%%%%%%%%%%%%%%%%%%%%%%%%%%%%%%%%%%%%%%%%%%%%%%%%%%%%%%%%%%%%%%%%%%%%%%%%%%%%%%%%%%%%%%%%


\begin{thebibliography}{99}
\bibitem{AchDahTur-19} D. Achour, E. Dahia and P. Turco, Lipschitz $p$-compact mappings, Monatsh. Math. \textbf{189} (2019) 595--609.
\bibitem{AchDahTur-20} D. Achour, E. Dahia and P. Turco, The Lipschitz injective hull of Lipschitz operator ideals and applications, Banach J. Math. Anal. \textbf{14} (2020), no. 3, 1241--1257.
\bibitem{AinLilOja-12} K. Ain, R. Lillemets and E. Oja, Compact operators which are defined by $\ell_p$-spaces, Quaest. Math. \textbf{35} (2012), no. 2, 145--159.
\bibitem{AroBotPelRue-10} R. Aron, G. Botelho, D. Pellegrino and P. Rueda, Holomorphic mappings associated to composition ideals of polynomials, Atti Accad. Naz. Lincei Rend. Lincei Mat. Appl. \textbf{21} (2010), no. 3, 261--274. 
\bibitem{AroCalGarMae-16} R. Aron, E. \c{C}ali\c{s}kan, D. Garc\'ia and M. Maestre, Behavior of holomorphic mappings on $p$-compact sets in a Banach space, Trans. Amer. Math. Soc. \textbf{368} (2016) 4855--4871.
\bibitem{AroMaeRue-10} R. Aron, M. Maestre and P. Rueda, $p$-Compact holomorphic mappings, RACSAM Rev. R. Acad. Cienc. Exactas Fis. Nat. Ser. A Mat. \textbf{104} (2) (2010) 353--364.
%\bibitem{AroRue-11} R. Aron and P. Rueda, $p$-Compact homogeneous polynomials from an ideal point of view. Contemp. Math., vol. \textbf{547}, Amer. Math. Soc., (2011), 61--71.
\bibitem{CabJimRuiSep-22} M. G. Cabrera-Padilla, A. Jim\'enez-Vargas and D. Ruiz-Casternado, On composition ideals and dual ideals of bounded holomorphic mappings, available in https://arxiv.org/pdf/2209.02956.pdf
\bibitem{ChaDimGal-19} J. A. Ch\'avez-Dom\'inguez, V. Dimant and D. Galicer, Operator $p$-compact mappings, J. Funct. Anal. \textbf{277} (2019) 2865--2891.
\bibitem{ChoKim-10} Y. S. Choi and J. M. Kim, The dual space of $(\L(X,Y),\tau_p)$ and the $p$-approximation property, J. Funct. Anal. \textbf{259} (2010) 2437--2454.
%\bibitem{DelOjaPiSer-09} J. M. Delgado, E. Oja, C. Pi\~{n}eiro and E. Serrano, The $p$-approximation property in terms of density of finite rank operators, J. Math. Anal. Appl. \textbf{354} (2009) 159--164.
\bibitem{DelPiSer-10} J. M. Delgado, C. Pi\~{n}eiro and E. Serrano, Operators whose adjoints are quasi $p$-nuclear, Stud. Math. \textbf{197} (2010) 291--304.
%\bibitem{DelPiSer-10b} J. M. Delgado, C. Pi\~{n}eiro and E. Serrano, Density of finite rank operators in the Banach space of $p$-compact operators, J. Math. Anal. Appl. \textbf{370} (2010) 498--505.
\bibitem{GalLasTur-12} D. Galicer, S. Lassalle and P. Turco, The ideal of $p$-compact operators: a tensor product approach, Stud. Math. \textbf{2828} (2012) 269--286.
\bibitem{Gro-55} A. Grothendieck, Produits tensoriels topologiques et espaces nucl\'eaires, Mem. Amer. Math. Soc. \textbf{1955} (1955), no. 16, 140 pp. (French).
%\bibitem{Jar-81} H. Jarchow, Locally convex spaces. Mathematische Leitf\"aden. B. G. Teubner, Stuttgart, 1981. 
\bibitem{JimRuiSep-22} A. Jim{\'e}nez-Vargas, D. Ruiz-Casternado and J. M. Sepulcre, On holomorphic mappings with compact type range, Bull. Malays. Math. Sci. Soc. \textbf{46} (2023), no. 1, Paper No. 20, 16 pp. 
\bibitem{LasTur-12} S. Lassalle and P. Turco, On $p$-compact mappings and the $p$-approximation properties, J. Math. Anal. Appl. \textbf{389} (2012) 1204--1221.
\bibitem{Muj-91} J. Mujica, Linearization of bounded holomorphic mappings on Banach spaces, Trans. Amer. Math. Soc. \textbf{324} (1991), 867--887.
\bibitem{Per-69} A. Persson, On some properties of $p$-nuclear and $p$-integral operators, Studia Math. \textbf{33} (1969), 213--222.
\bibitem{PerPie-69} A. Persson and A. Pietsch, $p$-nuklear und $p$-integrale Abbildungen in Banachr\"aumen, Stud. Math. \textbf{33} (1969) 19--62.
\bibitem{Pie-80} A. Pietsch, Operator ideals, North-Holland Mathematical Library, vol. 20, North-Holland Publishing Co., Amsterdam-New York, 1980. Translated from German by the author.
\bibitem{Pie-14} A. Pietsch, The ideal of $p$-compact operators and its maximal hull, Proc. Am. Math. Soc. \textbf{142} (2), (2014) 519--530.
\bibitem{SinKar-02} D. P. Sinha and A. K. Karn, Compact operators whose adjoints factor through subspaces of $\ell_p$, Studia Math. \textbf{150} (1) (2002) 17--33. 
%\bibitem{SinKar-08} D. P. Sinha and A. K. Karn, Compact operators which factor through subspaces of $\ell_p$, Math. Nachr. \textbf{281} (2008) 412--423.
\end{thebibliography}
\end{document}